\definecolor{todo}{rgb}{1,0,0}
\definecolor{conditional}{rgb}{0,1,0}
\definecolor{e-mail}{rgb}{0,.40,.80}
\definecolor{reference}{rgb}{.20,.60,.22}
\definecolor{mrnumber}{rgb}{.80,.40,0}
\definecolor{citation}{rgb}{0,.40,.80}
\newcolumntype{M}[1]{>{\centering\arraybackslash}m{#1}}
\DeclareMathAlphabet{\mathpzc}{OT1}{pzc}{m}{it}
\DeclareMathOperator{\Gal}{Gal}
\DeclareMathOperator{\PGL}{PGL}
\DeclareMathOperator{\BSL}{BSL}
\DeclareMathOperator{\SL}{SL}
\DeclareMathOperator{\GL}{GL}
\DeclareMathOperator{\Spec}{Spec}
\DeclareMathOperator{\cl}{cl}
\DeclareMathOperator{\Hoh}{H}
\DeclareMathOperator{\Roh}{R}
\DeclareMathOperator{\Eoh}{E}
\DeclareMathOperator{\Sq}{Sq}
\renewcommand{\P}{\mathrm{P}}
\newcommand{\Q}{\mathrm{Q}}
\newcommand{\et}{\mathrm{\acute{e}t}}
\newcommand{\B}{\mathrm{B}}
\newcommand{\CH}{\mathrm{CH}}
\renewcommand{\cl}{\mathrm{cl}}
\DeclareMathOperator{\K}{K}
\DeclareMathOperator{\KU}{KU}
\newcommand{\G}{\mathrm{G}}
\newcommand{\iso}{\cong}
\newcommand{\CC}{\mathds{C}}
\newcommand{\QQ}{\mathds{Q}}
\newcommand{\ZZ}{\mathds{Z}}
\newcommand{\FF}{\mathds{F}}
\newcommand{\PP}{\mathds{P}}
\newcommand{\Gm}{\mathds{G}_{m}}
\let\oldmarginpar\marginpar
\renewcommand\marginpar[1]{\-\oldmarginpar[\raggedleft\footnotesize #1]%
{\raggedright\footnotesize #1}}
\newcommand{\SO}{\mathrm{SO}}
\newcommand{\BG}{\mathrm{BG}}
\newcommand{\BPGL}{\mathrm{BPGL}}
\newcommand{\BGL}{\mathrm{BGL}}
\DeclareMathOperator{\chr}{char}
\theoremstyle{plain}
\newtheorem{theorem}{Theorem}[section]
\newtheorem*{theorem*}{Theorem}
\newtheorem{proposition}[theorem]{Proposition}
\newtheorem{conjecture}[theorem]{Conjecture}
\newtheoremstyle{named}{}{}{\itshape}{}{\bfseries}{.}{.5em}{#1 \thmnote{#3}}
\theoremstyle{named}
\theoremstyle{definition}
\theoremstyle{remark}
\begin{document}

\title{On the integral Tate conjecture for finite fields and representation theory}

\author{Benjamin Antieau}
\email{benjamin.antieau@gmail.com}
\address{University of Illinois at Chicago}

\classification{14C15 (primary), 14C30, 55R40 (secondary).}
\keywords{Tate conjecture, cycle class maps, classifying spaces of algebraic groups, Chow rings.}

\begin{abstract}
    \noindent
    We describe a new source of counterexamples to the so-called integral Hodge and integral Tate
    conjectures. As in the other known counterexamples to the integral Tate conjecture over
    finite fields, ours are approximations of
    the classifying space of some group $\BG$. Unlike the other examples, we find groups
    of type $A_n$, our proof relies heavily on representation theory, and Milnor's
    operations vanish on the classes we construct.
\end{abstract}

\maketitle

\section{Introduction}

Recall that the original conjecture of Hodge predicted that the cycle class maps
\begin{equation*}
    \cl^i:\CH^i(X)\rightarrow\Hoh^{2i}(X,\ZZ)
\end{equation*}
would be surjective onto the subgroup of $(i,i)$-classes when $X$ is a smooth projective
variety. In~\cite{atiyah-hirzebruch} Atiyah and Hirzebruch constructed for each prime $\ell$
a smooth projective complex algebraic variety $X$ and an $\ell$-torsion integral cohomology
class $x\in\Hoh^4(X,\ZZ)$, necessarily of type $(2,2)$, such that $x$ is not in the image
of the cycle class map. They use the differentials $d_r$ in the Atiyah-Hirzebruch spectral
sequence and argue that $d_r$ vanishes on the image of $\cl^i$. By studying the cohomology of
finite groups, specifically $(\ZZ/\ell)^3$, they exhibit degree $4$ torsion classes not killed by these
differentials. Using K\"unneth, $\Hoh^*(\B(\ZZ/\ell)^3,\FF_\ell)$ has three degree $1$
generators, $y_1,y_2,y_3$. The class $y=\beta(y_1y_2y_3)\in\Hoh^4(\B(\ZZ/\ell)^3,\ZZ)$ is the
class used by Atiyah and Hirzebruch. They show that $d_{2\ell-1}(y)\neq 0$.
Any smooth projective variety that looks sufficiently like $\B(\ZZ/\ell)^3\times\B\Gm$ then
possesses a cohomology class $x\in\Hoh^4(X,\ZZ)$ with $d_{2\ell-1}(x)\neq 0$. Hence,
this class cannot be in the image of the cycle
class map. The Godeaux-Serre construction is one way to produce such varieties.

Tate's conjecture is for smooth projective varieties $X$ defined over a field $k$ finitely
generated over its prime field and
for a fixed prime $\ell$ different from $\chr(k)$. It asserts that the cycle class maps in
$\ell$-adic cohomology
\begin{equation*}
    \cl^i:\CH^i(X_{\overline{k}})_{\QQ_{\ell}}\rightarrow\bigcup_U\Hoh^{2i}(X_{\overline{k}},\QQ_{\ell}(i))^U\subseteq\Hoh^{2i}(X_{\overline{k}},\QQ_\ell(i))
\end{equation*}
are surjective, where $U$ varies over all open subgroups of $\Gal(\overline{k}/k)$ and
$\overline{k}$ is the algebraic closure of $k$.

There is an integral version of Tate's conjecture in which $\QQ_\ell(i)$ is replaced by
$\ZZ_\ell(i)$, but, like the integral Hodge conjecture, it is false and it appears that it
was not expected by Tate. Colliot-Th\'el\`ene and
Szamuely adapted the examples of Atiyah
and Hirzebruch to this situation in~\cite{colliot-thelene-szamuely}*{Th\'eor\`eme 2.1}. Non-torsion classes not in the image of the cycle
class map (either in singular cohomology over $\CC$ or in $\ell$-adic cohomology)
were constructed over uncountable fields by Koll\'ar using very general
hypersurface high-degree hypersurfaces in $\PP^4_\CC$.

Since the examples mentioned so far use either torsion cohomology classes or an
uncountable ground field,
Colliot-Th\'el\`ene and Szamuely asked
in~\cite{colliot-thelene-szamuely}*{Remark~2.2.3} whether a modified integral Tate
conjecture asserting the surjectivity of
\begin{equation}\label{eq:modtors}
    \CH^i(X_{\overline{k}})_{\ZZ_\ell}\rightarrow\Hoh^{2i}(X_{\overline{k}},\ZZ_\ell(i))/\mathrm{torsion}
\end{equation}
could still be true for smooth projective varieties over finite fields.

However, in~\cite{pirutka-yagita}, Pirutka and Yagita constructed counterexamples to the
surjectivity of~\eqref{eq:modtors} over finite
fields for $\ell=2,3,5$. Their examples are smooth projective
approximations to the classifying spaces $\B(\G\times\Gm)$ in characteristic $p$ for $\G$ a
simply connected affine algebraic group in the following cases:
$\G_2$ at the prime $\ell=2$, $\mathrm{F}_4$ at the prime $\ell=3$, and $\mathrm{E}_8$ at the prime $\ell=5$.
Each of these groups has $\Hoh^4(\BG,\ZZ)=\ZZ\cdot x$ for some generator $x$, and they show that $x$ is not
in the image of the cycle class map. This result gives new counterexamples to the integral Hodge
conjecture and, by using integral models and reduction modulo $p$, gives counterexamples to
the modulo-torsion integral Tate conjecture over finite fields, answering the question of
Colliot-Th\'el\`ene and Szamuely.

Shortly after the appearance of~\cite{pirutka-yagita},
Kameko~\cite{kameko} gave examples for all primes $\ell$ by studying
$\B(\SL_\ell\times\SL_\ell/\mu_\ell)$, where $\mu_\ell$ is embedded diagonally.

In all of these examples over finite fields, the argument actually goes by embedding the
disruptive group $(\ZZ/\ell)^3$ into $\G$ and then showing that some
$x\in\Hoh^4(\BG,\ZZ)$ restricts to the class $y\in\Hoh^4(\B(\ZZ/\ell)^3,\ZZ)$ found by
Atiyah and Hirzebruch. In other words, they show that $d_{2\ell-1}(x)\neq 0$ in the
Atiyah-Hirzebruch spectral sequence.

The purpose of the present paper is to return to these questions and
give a different type of argument, for certain groups of type $A_n$, relying on representation
theory.

Let $\G$ be a quotient of $\SL_{n}$ by a central subgroup $\mathrm{C}\subseteq\mu_n$. Then,
$\Hoh^4(\BG,\ZZ)\rightarrow\Hoh^4(\BSL_{n},\ZZ)$ is injective, and we can write
$\Hoh^4(\BG,\ZZ)$ as $\ZZ\cdot nc_2(\gamma_n)$, some multiple of $c_2$ of the standard
$n$-dimensional representation of $\SL_{n}$. Note however, that unless $\mathrm{C}$ is
trivial $\gamma_n$ does not descent to a representation of $\G$.

\begin{theorem}\label{thm:main}
    Let $\G$ be either $\SL_{8}/\mu_2$ or $\SL_{9}/\mu_3$,
    viewed as complex affine algebraic groups.
    Then, $\Hoh^4(\BG,\ZZ)=\ZZ\cdot c_2$, while the image of
    $\CH^2(\BG)\rightarrow\Hoh^4(\BG,\ZZ)$ is $2c_2\cdot\ZZ$ (resp. $3 c_2\cdot\ZZ$).
\end{theorem}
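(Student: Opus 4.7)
The plan is to combine a spectral sequence computation of $\Hoh^4(\BG,\ZZ)$ with a representation-theoretic analysis of the image of the cycle class map in degree $2$.

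First, I would verify that $\Hoh^4(\BG,\ZZ) = \ZZ \cdot c_2$, where $c_2$ corresponds to $c_2(\gamma_n)$ under the injection $\Hoh^4(\BG,\ZZ) \hookrightarrow \Hoh^4(\BSL_n,\ZZ)$. I would use the Serre spectral sequence for the fibration $\BSL_n \to \BG \to K(\ZZ/d,2)$ coming from the central extension $1 \to \mu_d \to \SL_n \to \G \to 1$, with $(n,d) = (8,2)$ or $(9,3)$. The key input is $\Hoh^4(K(\ZZ/d,2),\ZZ) = 0$, which follows from universal coefficients applied to Serre's computation of $\Hoh^*(K(\ZZ/d,2),\FF_d)$: the $d$-torsion of $\Hoh^4(K,\ZZ)$ vanishes, and the group is $d$-primary. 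Combined with the vanishing of the transgression $d_5(c_2) \in \Hoh^5(K(\ZZ/d,2),\ZZ)$ for the specific pairs at hand, this yields $\Hoh^4(\BG,\ZZ) \isomto \ZZ \cdot c_2(\gamma_n)$.

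Second, I would identify the image of $\CH^2(\BG) \to \Hoh^4(\BG,\ZZ)$ with the subgroup generated by $c_2(V)$ for $V$ a representation of $\G$. Since $\pi_1(\G) = \mu_d$ is torsion, $\Hoh^2(\BG,\ZZ) = 0$, so no products of first Chern classes appear. The representation ring $R(\G)$ is the subring of $R(\SL_n)$ whose irreducible constituents $V_\lambda$ satisfy $d \mid |\lambda|$, and under the identification from step~1 the class $c_2(V_\lambda)$ corresponds to $\ind(V_\lambda) \cdot c_2$, where $\ind$ denotes the Dynkin index. Thus the image of $\CH^2(\BG)$ is $\gcd\{\ind(V_\lambda) : d \mid |\lambda|\} \cdot c_2$.

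The upper bound on this gcd follows from specific representations: for $\G = \SL_8/\mu_2$, $\ind(\Lambda^2\gamma_8) = \binom{6}{1} = 6$ and $\ind(\mathfrak{sl}_8) = 2 \cdot 8 = 16$, giving gcd at most $\gcd(6,16) = 2$; for $\G = \SL_9/\mu_3$, $\ind(\Lambda^3\gamma_9) = \binom{7}{2} = 21$ and $\ind(\mathfrak{sl}_9) = 18$, giving gcd at most $\gcd(21,18) = 3$. The matching lower bound, namely that $d$ divides $\ind(V_\lambda)$ for every irreducible $V_\lambda$ with $d \mid |\lambda|$, is the principal obstacle and the representation-theoretic heart of the proof. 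I would attempt to prove it via the formula $\ind(V_\lambda) = \tfrac{1}{n-1}\sum_\mu m_\mu(|\mu|^2 - |\lambda|^2/n)$ together with a careful mod-$d$ analysis, showing that the constraint $d \mid |\lambda|$ and the combinatorics of weight multiplicities for $\SL_n$ force the sum to be divisible by $d(n-1)$. It is this congruence, rather than a Steenrod operation argument, that constitutes the novel representation-theoretic ingredient highlighted in the abstract.
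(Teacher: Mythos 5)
Your skeleton is the right one and agrees with the paper's: compute $\Hoh^4(\BG,\ZZ)$ by a Serre spectral sequence, then identify the image of $\CH^2(\BG)$ with the subgroup generated by second Chern classes of representations (Totaro, Esnault--Kahn--Levine--Viehweg) and hence with a gcd of Dynkin indices times $c_2$; your upper-bound representations ($\Lambda^2\gamma_8$ and the adjoint for $\SL_8/\mu_2$, $\Lambda^3\gamma_9$ and the adjoint for $\SL_9/\mu_3$) are exactly the right ones and their indices match the paper's tables. But the step you yourself call ``the principal obstacle'' --- that $d\mid\ind(V_\lambda)$ for \emph{every} irreducible $V_\lambda$ with $d\mid|\lambda|$ --- is left as an unexecuted ``careful mod-$d$ analysis'' of a weight-sum formula, and this is precisely where the proof lives. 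There is no reason to expect that congruence to fall out of the combinatorics of weight multiplicities by inspection, and the paper does not attempt it. Instead it makes the reduction you are missing: since $\Hoh^2(\BG,\ZZ)=0$, all first Chern classes vanish, so the index is additive on direct sums and satisfies $\ind(V\otimes W)=\dim W\,\ind V+\dim V\,\ind W$; hence the subgroup of $\ZZ$ generated by $\{\ind V_\lambda: d\mid|\lambda|\}$ coincides with the subgroup generated by the indices of a \emph{finite} set of ring generators of $\Roh[\G]$. The paper lists these generators ($13$ for $\SL_8/\mu_2$, $23$ for $\SL_9/\mu_3$) and computes their indices by machine, reading off the gcd as $2$ (resp.\ $3$). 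Without this reduction your lower bound is an open combinatorial problem, not a proof.

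A secondary gap: in step one you assert the vanishing of the transgression $d_5(c_2)\in\Hoh^5(K(\ZZ/d,2),\ZZ)$ ``for the specific pairs at hand'' with no method for verifying it. This is a genuine case-by-case computation --- for $\SL_2/\mu_2\iso\PGL_2$ the analogous transgression generates $\ZZ/4$ and the edge map has image $4c_2\cdot\ZZ$ --- so it cannot be waved through. The paper instead runs the spectral sequence of the other fibration $\B\mu_d\rightarrow\BSL_n\rightarrow\BG$ and feeds in the external computation $\Hoh^5(\BG,\ZZ)\iso\ZZ/4$ (resp.\ $\ZZ/\ell$) to force the edge map $\Hoh^4(\BG,\ZZ)\rightarrow\Hoh^4(\BSL_n,\ZZ)$ to be an isomorphism. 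Either route needs such an input; yours currently has none.
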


Standard arguments then give counterexamples to the mod-torsion Hodge and Tate conjectures
in characteristic zero and the Tate conjecture over $\FF_p$ for $\ell=2,3$ for all but
finitely many primes $p$. In
particular, we obtain a new proof of the following theorem of Pirutka-Yagita (for
$\ell=2,3,5$) and Kameko (for all $\ell$).

\begin{theorem}[\cite{pirutka-yagita}*{Theorem~1.1},\cite{kameko}*{Theorem~1.1}]\label{thm:tate}
    Let $\ell=2,3$. Then, for all primes $p\neq\ell$ there exists a smooth
    projective variety $X$ over $\FF_p$ such that the cycle map
    \begin{equation*}
        \cl^2:\CH^2(X_{\overline{k}})_{\ZZ_{\ell}}\rightarrow\bigcup_U\Hoh^{4}(X_{\overline{k}},\ZZ_{\ell}(i))^U/\mathrm{torsion}
    \end{equation*}
    is not surjective.
\end{theorem}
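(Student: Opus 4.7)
The approach is the standard spread-out argument used by Pirutka--Yagita and Kameko: transport the non-surjectivity in Theorem~\ref{thm:main} from $\BG$ over $\CC$ to smooth projective varieties over $\FF_p$ via the Godeaux--Serre construction and smooth proper base change.

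First, I would run Godeaux--Serre over $\ZZ$: take the standard representation $V$ of $\G=\SL_n/\mu_\ell$ (with $(n,\ell)=(8,2)$ or $(9,3)$), choose a $\G$-stable open $W\subseteq V^{\oplus r}$ on which $\G$ acts freely and whose complement has codimension at least $3$, and form a smooth projective compactification $\mathcal{X}$ of $W/\G$ over $\Spec\ZZ[1/N]$ for a suitable $N$. By the standard Totaro comparison for the open embedding $W/\G\hookrightarrow\mathcal{X}_\CC$, one obtains surjections $\CH^2(\mathcal{X}_\CC)\twoheadrightarrow\CH^2(\BG)$ and $\Hoh^4(\mathcal{X}_\CC,\ZZ)\twoheadrightarrow\Hoh^4(\BG,\ZZ)=\ZZ\cdot c_2$, so Theorem~\ref{thm:main} implies that any lift of $c_2$ to $\Hoh^4(\mathcal{X}_\CC,\ZZ)$ is not in the image of $\cl^2$ modulo torsion.

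Fix a prime $p$ with $p\nmid N\ell$ and set $X=\mathcal{X}\times_{\ZZ[1/N]}\FF_p$. Smooth and proper base change, together with Artin comparison, give a Galois-equivariant identification
\begin{equation*}
    \Hoh^4(X_{\overline{\FF_p}},\ZZ_\ell(2))/\mathrm{torsion}\;\cong\;\big(\Hoh^4(\mathcal{X}_\CC,\ZZ)\otimes\ZZ_\ell\big)/\mathrm{torsion},
\end{equation*}
under which $c_2$ corresponds to a Galois-invariant class, being the pullback of a Chern class defined integrally on $\BSL_n$. In particular, $\bigcup_{U}\Hoh^4(X_{\overline{\FF_p}},\ZZ_\ell(2))^U/\mathrm{torsion}$ contains $c_2$, while the image of $\cl^2$ contains $\ell c_2\ZZ_\ell$ by specialization of algebraic cycles from $\mathcal{X}_\CC$.

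The main obstacle is to rule out extra algebraic classes in characteristic $p$ that might enlarge the image beyond $\ell c_2\ZZ_\ell$. This is where the hypothesis $p\neq\ell$ enters: the torsion primes of $\G=\SL_n/\mu_\ell$ are exactly the primes dividing $\ell$, so at $p\neq\ell$ the $\ell$-local Chow ring of $\BG$ is independent of the base field, and the image of $\cl^2$ on $X_{\overline{\FF_p}}$ modulo torsion matches the image on $\mathcal{X}_\CC$. Combining this with the strict inclusion $\ell c_2\ZZ_\ell\subsetneq\ZZ_\ell c_2$ from Theorem~\ref{thm:main} finishes the argument.
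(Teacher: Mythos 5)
Your skeleton (spread out an approximation of $\BG$ over $\ZZ[1/N]$, reduce mod $p$, compare via base change and the characteristic-zero computation) matches the paper's, but the step where you ``rule out extra algebraic classes in characteristic $p$'' has a genuine gap, and it is exactly the step the paper spends most of its effort on. Your justification controls $\CH^2(\BG_{\overline{\FF}_p})$ --- and indeed the representation ring and hence the image of the Chern classes is characteristic-independent --- but the theorem is about $\CH^2(X_{\overline{\FF}_p})$ for a smooth \emph{projective} $X$. In the Godeaux--Serre/Totaro constructions, the projective variety is a complete-intersection slice of (the closure of) the quasi-projective approximation $W/\G$, i.e.\ a \emph{closed} subvariety of it; there is no restriction map $\CH^2(X)\rightarrow\CH^2(W/\G)=\CH^2(\BG)$, so nothing in your argument prevents $X_{\overline{\FF}_p}$ from carrying exotic codimension-$2$ cycles whose cohomology classes hit $c_2$. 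Your alternative, a ``smooth projective compactification of $W/\G$ over $\ZZ[1/N]$'' containing $W/\G$ as an open (which would repair this via the localization sequence), is not known to exist: over $\CC$ one has Hironaka, but spreading out a characteristic-zero compactification only yields smooth fibers at almost all $p$, whereas the theorem claims all $p\neq\ell$ --- which is precisely why the paper resorts to Poonen's Bertini theorem over finite fields to produce a smooth projective slice fiber by fiber.

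Because of this, the paper does not attempt to control $\CH^2(X_{\overline{\FF}_p})$ geometrically at all; it instead produces a \emph{cohomological obstruction that survives reduction modulo $p$}. By the Atiyah--Segal completion theorem (Proposition~\ref{prop:diff2}) some higher differential $d_r$ with $r>3$ in the Atiyah--Hirzebruch spectral sequence is nonzero on $c_2$; since $d_{2\ell-1}(c_2)=0$ and all Milnor operations $\Q_i$ vanish on $\rho(c_2)$ (Theorem~\ref{thm:milnor2}), the classical $\Sq^3_\ZZ$ argument of Atiyah--Hirzebruch and its descendants is unavailable, and the paper realizes $d_r$ as a functorial operation on $\ell$-adic cohomology via the differential in Thomason's Bott-inverted \'etale $K$-theory spectral sequence, compatible with the topological one over $\CC$ and with base change to $\overline{\FF}_p$. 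That operation kills every class coming from $\CH^2$ (such classes are second Chern classes of representations, pulled back from $\BGL_{n}$, which has no odd cohomology, $r$ being odd) but does not kill $c_2$. Your proposal has no counterpart to this mechanism, and without one the asserted equality of the images of $\cl^2$ across characteristics is not established.
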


Our arguments not only differ from those used by Pirutka, Yagita, and Kameko, but they must
differ. Indeed, we show that in
the cases of Theorem~\ref{thm:main} the differential $d_{2\ell-1}(c_2)=0$ in contrast to the
work of the previous authors. Moreover, if $\rho(c_2)$ is the reduction modulo $\ell$ of
$c_2\in\Hoh^4(\BG,\ZZ)$ for $G$ as in the theorem, then all of Milnor's operations $\Q_i$
vanish on $\rho(c_2)$. However, thanks to the Atiyah-Segal completion theorem, some later
differential $d_r$ in the spectral sequence does detect the non-algebraicity of $c_2$;
see Proposition~\ref{prop:diff2}.

The approach we discovered is checkable for $\ell=2,3$ and in theory for
$\SL_{\ell^2}/\mu_\ell$ for any given odd prime $\ell$.
Namely, we use the fact that $\CH^2(\BG)$ is generated by the Chern classes of polynomial
representations of $\G$. We then determine generators for the representation rings of
$\SL_{8}/\mu_2$ and $\SL_{9}/\mu_3$ and compute, using a short \texttt{SAGE}~\cite{sage}
program, the second Chern
classes of all of these representations. The theorem follows immediately from this
computation.

\begin{conjecture}
    If $\ell$ is an odd prime, the image of $\CH^2(\BG)\rightarrow\Hoh^4(\BG,\ZZ)=\ZZ\cdot c_2$ is generated by
    $\ell c_2$ for $\G=\SL_{\ell^2}/\mu_\ell$.
\end{conjecture}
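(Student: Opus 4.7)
The plan is to reduce the conjecture to an $\ell$-adic divisibility statement for Dynkin indices of representations of $G = \SL_n/\mu_\ell$, where $n = \ell^2$. I would first confirm, by an analysis of the Serre spectral sequence of $\B\mu_\ell \to \BSL_n \to \BG$ analogous to that used for Theorem~\ref{thm:main}, that $\Hoh^4(\BG,\ZZ) = \ZZ c_2$ with $c_2$ corresponding to $c_2(\gamma_n) \in \Hoh^4(\BSL_n,\ZZ)$ under the pullback map. Indeed $\BG$ is simply connected and $\Hoh^4(\BSL_n,\ZZ)$ is torsion-free, so the $\ZZ/\ell$ contributions at $E_2^{0,4}$ and $E_2^{2,2}$ must vanish at $E_\infty$, while no nonzero differential hits $E_2^{4,0}$. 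Totaro's theorem then reduces the conjecture to studying $c_2(V) = T(V)\,c_2(\gamma_n)$ for $V \in R(G)$, where $T(V) \in \ZZ$ is the Dynkin index; equivalently, I must show that $\gcd\{T(V) : V \in R(G)\} = \ell$.

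The second step is to describe $R(G)$ and derive a closed formula for $T$ on it. The ring $R(G) \subseteq R(\SL_n) = \ZZ[\lambda_1,\dots,\lambda_{n-1}]$, with $\lambda_i = [\Lambda^i\gamma_n]$, is the subgroup on which the central $\mu_\ell$ acts trivially; since the action has character $i \bmod \ell$ on $\lambda_i$, this subgroup is additively spanned by the monomials $V = \lambda_{i_1}^{k_1}\cdots \lambda_{i_r}^{k_r}$ with $\sum_s k_s i_s \equiv 0 \pmod{\ell}$. Iterating the derivation rule $T(V\otimes W) = \dim(W)T(V) + \dim(V)T(W)$ starting from $T(\lambda_i) = \binom{n-2}{i-1}$, I obtain
\[
    T(V) \;=\; \frac{\dim(V)}{n(n-1)} \sum_s k_s\, i_s (n - i_s).
\]
Applying Lucas' and Kummer's theorems to $n = \ell^2$ gives the crucial arithmetic input: $v_\ell(\binom{\ell^2}{i}) = 1$ if $\ell \mid i$ and $= 2$ otherwise (for $0 < i < \ell^2$), and $v_\ell(\binom{\ell^2-2}{j\ell - 1}) = 1$. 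A short case analysis on the total degree $K = \sum_s k_s$ and on the number of indices $i_s$ coprime to $\ell$ then confirms $v_\ell(T(V)) \geq 1$: for $K = 1$ the only invariant monomial is $V = \lambda_{j\ell}$, handled by Kummer; for $K \geq 2$, either the valuation of $\dim(V)$ alone covers the denominator $\ell^2(\ell^2-1)$, or an explicit computation of $n\sum k_s i_s - \sum k_s i_s^2$ supplies the missing factor of $\ell$ in the subcase where every $i_s$ is divisible by $\ell$.

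For the matching lower bound I would combine two specific invariant representations: $V_1 = \Lambda^\ell \gamma_n$, for which Kummer gives $T(V_1) = \binom{\ell^2-2}{\ell-1}$ of $\ell$-adic valuation exactly $1$, and $V_2 = \gamma_n^{\otimes \ell}$, whose Dynkin index is computed directly by iterating the derivation rule to be $\ell \cdot n^{\ell-1} = \ell^{2\ell - 1}$. Since $T(V_1)/\ell$ is a unit modulo $\ell$ while $T(V_2)/\ell = \ell^{2\ell-2}$ is a pure power of $\ell$, their gcd is $\ell$, which forces the image of $\CH^2(\BG)$ to equal $\ell c_2 \ZZ$. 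The \textbf{main obstacle} I expect is the low-degree case analysis in the divisibility step: while the structural input from Lucas' theorem and the derivation rule for $T$ is clean, one must verify by hand that the invariance condition $\sum_s k_s i_s \equiv 0 \pmod \ell$, combined with the exact behaviour of $v_\ell$ on the binomial coefficients, always suffices to overcome the denominator $n(n-1) = \ell^2(\ell^2 - 1)$, and this becomes delicate precisely when the total degree $K$ is small.
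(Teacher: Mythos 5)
This statement is left as an open \emph{conjecture} in the paper (the author explicitly defers it to future work and says that the computational method ``cannot simply go on''), so there is no proof of record to compare against; your proposal has to stand on its own, and as far as I can check it does --- written out carefully, it would settle the conjecture. The reduction to $\gcd\{T(V)\}$ over a spanning set of $\Roh[\G]$ is legitimate because every representation of $\G=\SL_{\ell^2}/\mu_\ell$ has $c_1=0$ (in $\CH^1(\BG)$ and in $\Hoh^2(\BG,\ZZ)=0$), so $V\mapsto c_2(V)$ is additive under direct sum and extends $\ZZ$-linearly to $\Roh[\G]$, which is indeed spanned by the invariant monomials in the $\lambda_i$. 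Your closed formula $T\bigl(\prod_s\lambda_{i_s}^{k_s}\bigr)=\frac{\dim V}{n(n-1)}\sum_s k_s i_s(n-i_s)$ is correct: it gives $\binom{n-2}{i-1}$ for a single $\lambda_i$ and is preserved by the derivation rule, and it reproduces the paper's tabulated values where they overlap (e.g.\ $T(\wedge^3\gamma_9)=\binom{7}{2}=21$, $T(\wedge^2\gamma_8)=6$, $T(\gamma_6^{(2,1)})=33$). The valuations $v_\ell\binom{\ell^2}{i}=2-[\ell\mid i]$ and $v_\ell\binom{\ell^2-2}{j\ell-1}=1$ are right by Kummer, and the case analysis you flag as the main obstacle is in fact cleaner than you fear: with $K=\sum_s k_s$ and $K_0=\sum_{\ell\mid i_s}k_s$ one has $v_\ell(T(V))=(2K-K_0)+v_\ell\bigl(\sum_s k_si_s(n-i_s)\bigr)-2$; if some $i_s$ is prime to $\ell$ then $2K-K_0\geq K+1\geq 3$ unless $K=1$, which invariance excludes, and if every $i_s$ is divisible by $\ell$ then $\ell^2$ divides the bracketed sum and the total is $\geq K\geq 1$. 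The lower bound is also correct: $v_\ell(T(\Lambda^\ell\gamma_n))=1$ exactly, $T(\gamma_n^{\otimes\ell})=\ell^{2\ell-1}$, and both descend to $\G$, so their gcd is $\ell$. The one step you should not wave at is $\Hoh^4(\BG,\ZZ)=\ZZ\cdot c_2$: torsion-freeness of $\Hoh^4(\BSL_{\ell^2},\ZZ)$ does \emph{not} by itself force the $\ZZ/\ell$'s at $\Eoh_\infty^{0,4}$ and $\Eoh_\infty^{2,2}$ to vanish (a torsion-free abutment can have torsion in its associated graded); the real argument needs $\Hoh^5(\BG,\ZZ)\iso\ZZ/\ell$ and the oddness of $\ell$, but this is precisely Proposition~\ref{prop:pullback}, which you may simply cite. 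Compared with the paper, which for $\ell=2,3$ machine-computes $c_2$ on an explicit generating set of irreducibles, your replacement of that generating set by the monomial basis in exterior powers --- on which the Dynkin index has a closed form amenable to $\ell$-adic estimates --- is exactly what makes the general odd prime accessible.
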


This paper is organized as follows. In Section~\ref{sec:charzero} we prove
Theorem~\ref{thm:main}. In Section~\ref{sec:charp} we construct smooth projective examples
and reduce modulo $p$. Finally, in
Section~\ref{sec:code} we explain the \texttt{SAGE} code we use for computations.

\paragraph{Acknowledgments.} We would like to thank Alena Pirutka sparking our interest in
this problem by giving a stimulating
talk at the Workshop on Chow groups, motives and derived categories at IAS in March 2015,
organized by Totaro and Voisin. We would also like to thank Alena Pirutka, Burt Totaro, and
Masaki Kameko for providing us with many useful comments on an early draft of the paper.
Special thanks go to Kameko and Totaro for pushing us to prove Theorem~\ref{thm:tate} for all primes
$p$.

\section{Examples in characteristic zero}\label{sec:charzero}

In this section we work with affine algebraic groups $\G$ over the complex numbers. Implicitly,
by $\Hoh^*(\BG,\ZZ)$ we mean $\Hoh^*(\BG(\CC),\ZZ)$ and so on. We will use the following
result from Totaro's book, which can also be found in the work of
Esnault-Kahn-Levine-Viehweg.

\begin{theorem}[\cite{totaro-book}*{Theorem
    2.25}, \cite{esnault-kahn-levine-viehweg}*{Lemma C.3}]\label{thm:ch2}
    If $\G$ is an affine algebraic group scheme over a field $k$, then $\CH^2(\BG)$ is generated by
    Chern classes of polynomial representation.
\end{theorem}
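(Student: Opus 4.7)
The plan is to approximate $\BG$ by a smooth quotient variety $X = U/\G$ and compare $\CH^2(X)$ with $K_0(X)$ and with the representation ring of $\G$. Following Totaro's definition of $\CH^*(\BG)$, choose a $\G$-representation $W$ and an open $\G$-invariant $U \subseteq W$ on which $\G$ acts freely with $\mathrm{codim}_W(W \setminus U) \geq 3$, so that $X := U/\G$ is smooth quasi-projective and $\CH^2(X) = \CH^2(\BG)$.

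The first ingredient is the surjectivity of the second Chern class map $c_2 : K_0(X) \to \CH^2(X)$ for any smooth variety $X$: given a codimension-$2$ integral subvariety $Z \subseteq X$, a locally free resolution of $\Oscr_Z$ yields a class $[\Oscr_Z] \in K_0(X)$, and Grothendieck-Riemann-Roch applied to $Z \hookrightarrow X$ gives $c_2([\Oscr_Z]) = \pm[Z]$; because $[\Oscr_Z]$ has rank and first Chern class zero, the degree-$2$ component of its Chern character equals $-c_2$ integrally, so the identification holds with no denominators.

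The second ingredient is the localization sequence in equivariant $K$-theory
\begin{equation*}
    K_0^{\G}(W \setminus U) \longrightarrow K_0^{\G}(W) \longrightarrow K_0^{\G}(U) \longrightarrow 0,
\end{equation*}
combined with the identifications $K_0^{\G}(U) = K_0(X)$ (free action) and $K_0^{\G}(W) \cong \mathrm{R}(\G)$ (Thom isomorphism for the $\G$-equivariant bundle $W \to \Spec k$). These yield a surjection $\mathrm{R}(\G) \twoheadrightarrow K_0(X)$. Combining with the first ingredient, every element of $\CH^2(\BG)$ equals $c_2(\xi)$ for some virtual representation $\xi = [V_1] - [V_2]$. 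Expanding $c_2(\xi)$ via the Whitney formula produces a $\ZZ$-linear combination of terms $c_i(V_s) c_j(V_t)$ with $i + j = 2$, and the identity $c_1(V_s) c_1(V_t) = c_2(V_s \oplus V_t) - c_2(V_s) - c_2(V_t)$ reduces everything to second Chern classes of polynomial representations.

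The main obstacle is the first step: establishing integral surjectivity of $c_2 : K_0(X) \to \CH^2(X)$ for possibly singular codimension-$2$ cycles. This rests on the ``Riemann-Roch without denominators'' principle, namely that for virtual bundle classes of rank and $c_1$ zero, the integral second Chern class recovers the Chern character in degree two; singular $Z$ can be handled by a resolution argument or by Bloch's formula $\CH^2 = \Hoh^2(X, \mathcal{K}_2)$ together with the Gersten resolution.
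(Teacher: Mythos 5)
The paper does not actually prove this statement---it cites \cite{totaro-book}*{Theorem 2.25} and \cite{esnault-kahn-levine-viehweg}*{Lemma C.3}, remarking only that the mod-$\ell$ proof in the book works integrally for $\CH^2$. Your argument---Riemann--Roch without denominators to get $c_2\bigl([\Oscr_Z]\bigr)=-[Z]$ and hence surjectivity of $c_2:K_0(X)\to\CH^2(X)$ on a Totaro approximation $X=U/\G$, the equivariant localization sequence plus $K_0^{\G}(W)\cong\mathrm{R}(\G)$ to get $\mathrm{R}(\G)\twoheadrightarrow K_0(X)$, and the Whitney-formula reduction of $c_2$ of a virtual class to second Chern classes of honest representations---is essentially the proof given in those references and is correct, with only cosmetic slips: the identification $K_0^{\G}(W)\cong\mathrm{R}(\G)$ is homotopy invariance of equivariant $K$-theory (Thomason) rather than a Thom isomorphism, and for singular $Z$ one must indeed invoke Riemann--Roch without denominators (Fulton, Example 15.3.6) rather than GRR itself, as you acknowledge.
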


Note that the statement in the book is for modulo $\ell$ Chow groups, but that the proof
works integrally for $\CH^2(\BG)$.

We say that the integral Hodge conjecture holds in codimension $2$ for $\BG$ where $\G$ is an affine
algebraic group over $\CC$ if $\Hoh^4(\BG,\ZZ)$ is generated by Chern classes of
representations. This is justified by Theorem~\ref{thm:ch2} and the fact that when it
fails one can construct counterexamples to the integral Hodge conjecture in codimension $2$
on smooth projective complex varieties.

\subsection{Groups of type $A_n$ for small $n$}

It is well-known that for $\G=\SL_n$ or $\G=\GL_n$ the cycle class maps give ring
isomorphisms $\CH^*(\BG)\rightarrow\Hoh^*(\BG,\ZZ)$. See~\cite{totaro-book}*{Chapter 2}. In
other low-dimensional cases, we can
obtain a similar conclusion, at least for $\Hoh^4(\BG,\ZZ)$. Write $\gamma_n$ for the
standard $n$-dimensional representation of $\SL_n$.

\begin{proposition}\label{prop:low}
    For $\G=\PGL_2,\PGL_3,\SL_4/\mu_2,\PGL_4,\PGL_5,\SL_6/\mu_2,\SL_6/\mu_3,\PGL_6$, or $\PGL_7$,
    $\Hoh^4(\BG,\ZZ)$ is generated by Chern classes of polynomial representations.
\end{proposition}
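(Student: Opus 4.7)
The plan is a group-by-group verification. For each of the nine $\G = \SL_n/\mathrm{C}$ in the list, the proof has two steps.

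First, I would identify $\Hoh^4(\BG,\ZZ)$ inside $\Hoh^4(\BSL_n,\ZZ) = \ZZ\cdot c_2(\gamma_n)$. Since the restriction map is injective, the target is of the form $\ZZ\cdot m\,c_2(\gamma_n)$ for some specific positive integer $m = m(\G)$. The value of $m$ can be read off from the Serre spectral sequence of the fibration $\BSL_n \to \BG \to \B^2\mathrm{C}$ attached to the central extension $1\to\mathrm{C}\to\SL_n\to\G\to 1$; concretely, $m\,c_2(\gamma_n)$ is the smallest positive multiple on which the transgression into $\Hoh^5(\B^2\mathrm{C},\ZZ)$ vanishes. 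Alternatively, one can just cite the existing integral cohomology computations for these low-rank classifying spaces.

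Second, for each $\G$ I would exhibit polynomial representations whose $c_2$'s collectively generate $\ZZ\cdot m\,c_2(\gamma_n)$. Writing the Chern roots of $\gamma_n$ as $\alpha_1,\dots,\alpha_n$ with $\sum\alpha_i = 0$, any representation $V$ of $\SL_n$ has Chern roots $\beta_k$ that are explicit integer linear combinations of the $\alpha_i$, and after restriction from $\SL_n$
\begin{equation*}
c_2(V) \;=\; -\tfrac{1}{2}\sum_k \beta_k^2
\end{equation*}
is a concrete integer multiple of $c_2(\gamma_n)$. Representations of $\G$ correspond to the sub-$\lambda$-ring of $\Rep(\SL_n)$ on which the central character vanishes on $\mathrm{C}$. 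A uniform baseline example available for each $\PGL_n$ is the adjoint representation, whose Chern roots $\alpha_i - \alpha_j$ for $i\neq j$ yield $c_2(\mathrm{Ad}) = 2n\,c_2(\gamma_n)$.

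It then remains, for each of the nine groups, to enumerate enough low-dimensional descending representations, compute the $\gcd$ of the resulting coefficients of $c_2(\gamma_n)$, and check that it equals $m$. The main obstacle is that this is a finite but delicate arithmetic check repeated nine times over nine different sublattices of the weight lattice of $\SL_n$: for each $\G$ both the available representations and the target integer $m$ are different, and one must verify in each case that among the $c_2$'s of descending representations there is either one whose coefficient equals $\pm m$ or a collection whose $\gcd$ equals $m$. As in Theorem~\ref{thm:main}, this is precisely the type of finite computation that can be automated using the \texttt{SAGE} code described in Section~\ref{sec:code}; the present proposition is the positive outcome of that computation in the small cases where, unlike for $\SL_8/\mu_2$ and $\SL_9/\mu_3$, the representations of $\G$ suffice.
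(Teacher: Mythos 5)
Your two-step plan (pin down $\Hoh^4(\BG,\ZZ)$ as $\ZZ\cdot m\,c_2$ inside $\Hoh^4(\BSL_n,\ZZ)$, then beat $m$ with a gcd of second Chern classes of descending representations) is exactly what the paper does for $\SL_4/\mu_2$, $\SL_6/\mu_2$, $\SL_6/\mu_3$, $\PGL_4$ and $\PGL_6$: the paper computes $m=2,4,3$ for the three $\SL_n/\mu_d$ cases via the Leray--Serre spectral sequence of $\B\mu_d\to\BSL_n\to\BG$ (the fiber-inclusion version of your $\B^2\mathrm{C}$ fibration), quotes Woodward for $m=2n$ when $n$ is even, and then exhibits $\wedge^2\gamma_4$, $\wedge^2\gamma_6$, the pair $\wedge^3\gamma_6,\gamma_6^{(2,1)}$ (with $\gcd(6,33)=3$), and the adjoint representation, respectively. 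Where you genuinely diverge is on $\PGL_2$, $\PGL_3$, $\PGL_5$, $\PGL_7$: there the paper does no representation theory at all, instead citing Vistoli (and Vezzosi for $\PGL_3$, Pandharipande for $\PGL_2\iso\SO_3$) for the statement that $\CH^*(\BPGL_p)\to\Hoh^*(\BPGL_p,\ZZ)$ is an isomorphism onto the even cohomology, and then invoking Theorem~\ref{thm:ch2}. Your uniform computational route would have to replace that citation with explicit representations, and this is the one place your proposal underestimates the work: for $n=p$ odd the target is $\ZZ\cdot p\,c_2$ while your ``uniform baseline'' $c_2(\mathrm{Ad})=2p\,c_2$ has the wrong $2$-adic valuation, so you must additionally produce a representation of $\PGL_p$ whose $c_2$ is an \emph{odd} multiple of $p\,c_2$ for each of $p=3,5,7$ and verify the gcd. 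Such representations exist (that is precisely the content of Vistoli's theorem combined with Theorem~\ref{thm:ch2}), and the \texttt{SAGE} code would find them, but as written your proof defers the only nontrivial arithmetic to an unperformed computation, whereas the paper's citation closes those cases outright. The trade-off is clear: your route is self-contained and uniform across all nine groups but computation-heavy; the paper's is shorter at the cost of importing Vistoli's substantially deeper calculation of $\CH^*(\BPGL_p)$.
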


\begin{proof}
    For $\PGL_3$, $\PGL_5$, and $\PGL_7$, this follows from the work of
    Vistoli~\cite{vistoli}, and also from Vezzosi~\cite{vezzosi} for $\PGL_3$. Indeed,
    Vistoli shows more generally that $\CH^*(\BPGL_p)\rightarrow\Hoh^*(\BPGL_p,\ZZ)$ is an
    isomorphism onto the even cohomology for $p$ an odd prime. The case of $\PGL_2\iso\SO_3$
    follows from Pandharipande's calculations in~\cite{pandharipande}. The claim now follows
    from Theorem~\ref{thm:ch2}.

    The group $\Hoh^4(\BPGL_n,\ZZ)\subseteq\Hoh^4(\BSL_n,\ZZ)=\ZZ\cdot c_2$ is generated by
    $2nc_2$ for $n$ even and $nc_2$ for $n$ odd. On the other hand, the adjoint
    representation has $c_2(\mathrm{Ad}_{\PGL_n})=2nc_2$. For details,
    see~\cite{woodward}*{Sections 1,2}. For $n$ even it then follows that
    $\Hoh^4(\BPGL_n,\ZZ)$ is generated by Chern classes, so this takes care of $\PGL_4$ and
    $\PGL_6$.

    One checks, as we will do in detail for $\SL_8/\mu_2$ in the next section, that in the remaining cases we
    have
    \begin{align*}
        \Hoh^4(\BSL_4/\mu_2,\ZZ)&=\ZZ\cdot 2c_2,\\
        \Hoh^4(\BSL_6/\mu_2,\ZZ)&=\ZZ\cdot 4c_2,\\
        \Hoh^4(\BSL_6/\mu_3,\ZZ)&=\ZZ\cdot 3c_2.
    \end{align*}
    In these cases, $c_2(\wedge^2\gamma_4)=2c_2$,
    $c_2(\wedge^2\gamma_6)=4c_2$, $c_2(\wedge^3\gamma_6)=6c_2$, and
    $c_2(\gamma_6^{(2,1)})=33c_2$, where $\gamma_6^{(2,1)}$ is the irreducible
    representation of $\SL_6$ corresponding to the partition $(2,1)$. See
    Section~\ref{sec:code} for details. In particular, we see
    that Chern classes generate $\Hoh^4(\BG,\ZZ)$ in each of the remaining cases. These representations
    all descend to the quotient groups because they are trivial on the given central
    subgroups.
\end{proof}

\subsection{$\BSL_8/\mu_2$}

Let $G=\SL_8/\mu_2$ over $\CC$.

\begin{theorem}\label{thm:p28}
    The pullback map $\Hoh^4(\BG,\ZZ)\rightarrow\Hoh^4(\BSL_8,\ZZ)=\ZZ\cdot c_2$ is an
    isomorphism, while the image of the cycle class map $\cl^2$ is $\ZZ\cdot 2c_2$ in
    $\Hoh^4(\BG,\ZZ)$. In particular,
    the integral Hodge conjecture fails in codimension $2$ for $\BG=\BSL_{8}/\mu_2$.
\end{theorem}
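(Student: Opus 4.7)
The plan is to prove the two assertions of the theorem separately. The first assertion, that $\Hoh^4(\BG,\ZZ) \to \ZZ c_2$ is an isomorphism, is purely topological (injectivity was noted in the introduction), and I will attack it with a Leray--Serre spectral sequence argument. The second assertion, that the image of $\CH^2(\BG) \to \Hoh^4(\BG,\ZZ)$ equals $\ZZ\cdot 2 c_2$, reduces by Theorem~\ref{thm:ch2} to computing second Chern classes of polynomial representations of $G$ and showing that $2$ is the smallest positive integer $N$ such that $N c_2$ arises in this way.

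For the topological step, I would use the Leray--Serre spectral sequence of the fibration $\BSL_8 \to \BG \to \B^2\mu_2$ coming from the central extension $1 \to \mu_2 \to \SL_8 \to G \to 1$. Because $\Hoh^{\mathrm{odd}}(\BSL_8,\ZZ)=0$ and $\Hoh^2(\BSL_8,\ZZ)=0$, the only $E_2$ term in total degree $4$ is $E_2^{0,4}=\ZZ c_2$ (with $E_2^{4,0}=\Hoh^4(K(\ZZ/2,2),\ZZ)=0$), and the only differential that can act nontrivially on $c_2$ is $d_5\colon \ZZ c_2 \to E_5^{5,0} = \Hoh^5(K(\ZZ/2,2),\ZZ) = \ZZ/4$. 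Hence $\Hoh^4(\BG,\ZZ)=\ker(d_5)$, and the task is to show $d_5(c_2)=0$. I would do this by naturality: the block-diagonal embedding $\SL_2 \hookrightarrow \SL_8$, $A\mapsto \mathrm{diag}(A,A,A,A)$, sends $-I\mapsto -I$, so it descends to $\PGL_2\to G$ and yields a map of fibrations which is the identity on $\B^2\mu_2$. The pullback of $c_2^{\SL_8}$ to $\Hoh^4(\BSL_2,\ZZ)$ is $c_2(\gamma_2^{\oplus 4})=4c_2^{\SL_2}$; by Proposition~\ref{prop:low} the image of $\Hoh^4(\B\PGL_2,\ZZ)$ in $\ZZ c_2^{\SL_2}$ is $4\ZZ c_2^{\SL_2}$, so the $\PGL_2$-transgression $d_5(c_2^{\SL_2})$ generates $\ZZ/4$, forcing $d_5(4c_2^{\SL_2})=0$. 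Naturality then yields $d_5(c_2^{\SL_8})=0$.

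For the representation-theoretic step, $\Hoh^2(\BG,\ZZ)=0$ as $G$ is semisimple, so $c_1$ vanishes on every representation of $G$; hence $c_2\colon R(G) \to \ZZ c_2$ is additive and satisfies $c_2(V\otimes W) = \dim V\cdot c_2(W) + \dim W\cdot c_2(V)$. The image is therefore the $\ZZ$-span of the $c_2(V_i)$ for any finite ring-generating set $\{V_i\}$ of $R(G)$. Polynomial representations of $G$ correspond to polynomial representations of $\SL_8$ with $|\lambda|$ even; I would take the generators to include $\wedge^2\gamma_8$, $\wedge^4\gamma_8$, $\wedge^6\gamma_8$ together with the tensor products $\wedge^i\gamma_8\otimes\wedge^j\gamma_8$ for $i,j\in\{1,3,5,7\}$, and run the \texttt{SAGE} code of Section~\ref{sec:code} to compute their Dynkin indices and check that their greatest common divisor is exactly $2$. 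The lower bound is immediate: $\gcd(c_2(\wedge^2\gamma_8),c_2(\gamma_8^{\otimes 2}))=\gcd(6,16)=2$, so $2c_2$ already lies in the image.

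The hard part will be the upper bound in the last step --- that no polynomial representation of $G$ yields $c_2$ itself. Because $G$ has infinitely many irreducible representations, this cannot be checked by hand, and the whole approach hinges on the finite reduction above (additivity of $c_2$ together with the tensor-product formula) which makes the \texttt{SAGE} verification feasible. Once the parity check succeeds on the finite generating set, combining with the first step gives $\Hoh^4(\BG,\ZZ)=\ZZ c_2$, which strictly contains the image $2\ZZ c_2$ of $\CH^2(\BG)$, and the integral Hodge conjecture in codimension $2$ therefore fails for $\BG$.
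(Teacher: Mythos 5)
Your proposal is correct, and the first half takes a genuinely different route from the paper. For the isomorphism $\Hoh^4(\BG,\ZZ)\riso\ZZ\cdot c_2$, the paper runs the Leray--Serre spectral sequence of the fibration $\B\mu_2\rightarrow\BSL_8\rightarrow\BG$ and argues backwards: it imports $\Hoh^3(\BG,\ZZ)\iso\ZZ/2$ and $\Hoh^5(\BG,\ZZ)\iso\ZZ/4$ from \cite{aw3}*{Proposition 6.2} and deduces that the relevant differentials must be isomorphisms because $\Hoh^*(\BSL_8,\ZZ)$ is torsion-free. You instead run the sequence of $\BSL_8\rightarrow\BG\rightarrow K(\ZZ/2,2)$, which isolates the single obstruction $d_5(c_2)\in\Hoh^5(K(\ZZ/2,2),\ZZ)\iso\ZZ/4$ and kills it by naturality along $\PGL_2\rightarrow\G$ together with $c_2|_{\BSL_2}=4c_2^{\SL_2}$; this is self-contained modulo the classical computation of $\Hoh^{\leq 5}(K(\ZZ/2,2),\ZZ)$ and avoids the citation of \cite{aw3}. (Two small remarks: the identification of $\Hoh^4(\B\PGL_2,\ZZ)$ with $4\ZZ\cdot c_2^{\SL_2}$ lives in the \emph{proof} of Proposition~\ref{prop:low}, via Woodward, not in its statement; and you do not actually need it, since $d_5(4c_2^{\SL_2})=4\,d_5(c_2^{\SL_2})=0$ in a group of exponent $4$ no matter what $d_5(c_2^{\SL_2})$ is.) The second half is the paper's strategy: Theorem~\ref{thm:ch2} plus additivity and the tensor formula for $c_2$ (valid because $\Hoh^2(\BG,\ZZ)=0$) reduce everything to ring generators of $\Roh[\G]$, and your generating set $\{\wedge^2\gamma_8,\wedge^4\gamma_8,\wedge^6\gamma_8\}\cup\{\wedge^i\gamma_8\otimes\wedge^j\gamma_8\}_{i,j\text{ odd}}$ is a correct one (it is the standard presentation of the $\ZZ/2$-invariants of $\ZZ[e_1,\dots,e_7]$ under $e_i\mapsto(-1)^ie_i$), matching the paper's thirteen generators except that you take tensor products where the paper takes the irreducibles $\gamma_8^{\alpha_i+\alpha_j}$. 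Your choice actually removes the need for \texttt{SAGE} here: $c_2(\wedge^k\gamma_8)=\binom{6}{k-1}c_2$ and the tensor formula give all thirteen coefficients by hand, with greatest common divisor $2$.
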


\begin{proof}
    Consider the $\Eoh_2$-page of the Leray-Serre spectral sequence for the fibration
    $\B\mu_2\rightarrow\BSL_{8}\rightarrow\BG$ displayed in Figure~\ref{fig:lerayserre}.

    \begin{figure}[h]
        \centering
        \begin{equation*}
            \xymatrix@R=8px@C=8px{
                (\ZZ/2)\cdot x^2 \ar^{d_3}[ddrrr] \ar@/^20px/^{d_5}[ddddrrrrr]\\
                0\\ (\ZZ/2)\cdot x
            \ar_{d_3}[ddrrr] & 0 & \ZZ/2 \ar@/_5px/^{d_3}[ddrrr]& \ZZ/2 \\ 0 \\ \ZZ & 0 & 0  &
        \Hoh^3(\BG, \ZZ) & \Hoh^4(\BG, \ZZ) & \Hoh^5(\BG,\ZZ) }
        \end{equation*}
        \caption{The $\Eoh_2$-page of the Leray-Serre spectral sequence associated with
            $\B\mu_2\rightarrow\BSL_{8}\rightarrow\BSL_8/\mu_2$.}
        \label{fig:lerayserre}
    \end{figure}

    The differential $d_3^{0,2}$ is an isomorphism as $\Hoh^3(\BG,\ZZ)\iso\ZZ/2$, and we
    know that $\Hoh^*(\BSL_{8},\ZZ)$ is torsion-free. Using the Leibniz rule,
    we see that $d_3(x^2)=0$. Moreover, $\Hoh^5(\BG,\ZZ)\iso\ZZ/4$ by the
    proof of~\cite{aw3}*{Proposition 6.2}. It follows that $d_3^{2,2}$ is non-zero, as is
    $d_5^{4,0}$, for otherwise there would be torsion in $\Hoh^5(\BSL_8,\ZZ)$. This shows
    that the edge map $\Hoh^4(\BG,\ZZ)\rightarrow\Hoh^4(\BSL_{8},\ZZ)$ is an isomorphism.

    It follows that given a cycle $x\in\CH^2(\BG)$, we can write $\cl^2(x)=n_x c_2$, where
    $c_2\in\Hoh^4(\BSL_{8},\ZZ)$ is the Chern class of the standard representation $\gamma_8$ of
    $\SL_{8}$. We will see that $n_x$ is always even.

    By Theorem~\ref{thm:ch2},
    to finish the proof it is enough to compute the second Chern classes of generating
    representations of the representation ring $\Roh[\G]$. Let $L_1,\ldots,L_8$ generate the
    weight lattice of $\SL_{8}$, with fundamental weights $\alpha_i=L_1+\cdots+L_i$ for $1\leq
    i\leq 7$. An irreducible representation with highest weight vector $\lambda_1L_1+\cdots+\lambda_8L_8$ of
    $\SL_{8}$ descends to a representation of $\G$ if and only if
    $\sum_i\lambda_i$ is even. Table~\ref{tab:c2}
    gives the generators of the weight lattice of $\G$, the associated Young
    diagrams, and the numbers $n_\lambda$ such that
    $c_2(\gamma_8^\lambda)=n_\lambda c_2(\gamma_8)=n_\lambda c_2$ in
    $\Hoh^4(\BSL_{8},\ZZ)$. See Section~\ref{sec:code} for how the table was computed.
    By inspection, these Chern classes are all even with greatest common
    divisor $2$, from which the theorem follows.
\end{proof}

\begin{center}
    \begin{table}[hl]
    \begin{tabular}{| M{1.5cm} | M{3cm} | M{1.5cm} ||| M{1.5cm} | M{3cm} | M{1.5cm} |}
        \hline
        Weight & $\lambda$ & $n_\lambda$ &Weight & $\lambda$ & $n_\lambda$ \\
        \hline
        $2\alpha_1$ & $(2)$ & $16$ & $\alpha_1+\alpha_3$&$(2,1,1)$& $156$\\
        $\alpha_2$&$(1,1)$& $6$ &$\alpha_1+\alpha_5$&$(2,1,1,1,1)$& $170$\\
        $2\alpha_3$&$(2,2,2)$& $700$ &$\alpha_1+\alpha_7$&$(2,1,1,1,1,1,1)$& $16$\\
        $\alpha_4$&$(1,1,1,1)$& $20$ & $\alpha_3+\alpha_5$&$(2,2,2,1,1)$& $1344$\\
        $2\alpha_5$&$(2,2,2,2,2)$& $700$ &      $\alpha_3+\alpha_7$&$(2,2,2,1,1,1,1)$& 170\\
        $\alpha_6$&$(1,1,1,1,1,1)$& $6$      & $\alpha_5+\alpha_7$&$(2,2,2,2,2,1,1)$& 156\\
        $2\alpha_7$&$(2,2,2,2,2,2,2)$& 10 &&&\\
        \hline
    \end{tabular}
    \caption{The second Chern classes of generators of $\Roh[\SL_8/\mu_2]$.}
    \label{tab:c2}
    \end{table}
\end{center}

The class $c_2\in\Hoh^4(\BG,\ZZ)$ is a non-torsion counterexample to the integral Hodge
conjecture, which we see directly from representation theory. However, it is nevertheless
important to know that some cohomology operation tells us that $c_2$ cannot be a cycle
class. This is critical when we want to pass from $\BG$ to a smooth projective variety,
either over $\CC$ or over a finite field.

In all previous group-theoretic counterexamples at the prime $\ell=2$, such as those of
Atiyah-Hirzebruch, Totaro, Pirutka-Yagita, or Kameko, one has $d_3(x)=\Sq^3_\ZZ(x)\neq 0$, where $d_3=\Sq^3$ is the
differential in the Atiyah-Hirzebruch spectral sequence converging to
$\KU^*(\BG)$, where $\KU$ is complex topological $K$-theory. This is
proved by showing more strongly that the Milnor operation $\Q_1$ does not vanish on
$\rho(x)$.
In our case, we have $d_3(c_2)=0$, and moreover $\Q_i(\rho(c_2))=0$ for all Milnor operations
$\Q_i$, where $\rho:\Hoh^*(\BG,\ZZ)\rightarrow\Hoh^*(\BG,\ZZ/2)$.

Recall that the Milnor operations are stable cohomology operations
$$\Q_i:\Hoh^m(X,\ZZ/\ell)\rightarrow\Hoh^{m+2\ell^i-1}(X,\ZZ/\ell).$$ These were defined in
motivic cohomology as well by Voevodsky~\cite{voevodsky-reduced}*{Section 9}, where we refer the reader for an overview. The key
point is that $\Q_i$ vanishes on the image of the cycle class map.

\begin{theorem}\label{thm:milnor2}
    Let $c_2\in\Hoh^4(\BG,\ZZ)$ be the generator, where $\G=\SL_8/\mu_2$, and let
    $x=\rho(c_2)$. Then, $\Q_i(x)=0$ for all $i\geq 0$.
\end{theorem}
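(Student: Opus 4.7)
My plan is to dispatch $\Q_0$ by a universal Bockstein observation, to reduce vanishing of $\Q_i(x)$ for $i \geq 1$ to a kernel problem via the central cover $\pi\colon \B\SL_8 \to \B G$, and finally to analyze that kernel with the Serre spectral sequence of $\B\mu_2 \to \B\SL_8 \to \B G$.

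Since $\Q_0 = \Sq^1$ is the mod-$2$ Bockstein and $x = \rho(c_2)$ is the reduction of an integral class, $\Q_0(x) = 0$ at once. For $i \geq 1$, by naturality of Milnor operations, $\pi^*\Q_i(x) = \Q_i(\pi^*x) = \Q_i(c_2)$ as an element of $H^{3+2^{i+1}}(\B\SL_8, \FF_2)$. But $H^*(\B\SL_8, \FF_2) = \FF_2[c_2, \ldots, c_8]$ is concentrated in even degrees while $\Q_i$ has odd degree $2^{i+1} - 1$, so $\Q_i(c_2) = 0$ trivially. Hence $\Q_i(x) \in \ker \pi^*$, and it remains to show this kernel vanishes at our class.

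To identify $\ker \pi^*$ I would run the Leray--Serre spectral sequence of $\B\mu_2 \to \B\SL_8 \to \B G$ with $\FF_2$-coefficients. Writing $y \in H^1(\B\mu_2, \FF_2)$ for the generator and $w := d_2(y) \in H^2(\B G, \FF_2)$ for its transgression, Kudo's transgression theorem gives $d_{2^k + 1}(y^{2^k}) = \Sq^{2^{k-1}} \cdots \Sq^2 \Sq^1(w)$ in degree $2^k + 1$. Since the spectral sequence abuts to the even-degree polynomial algebra $\FF_2[c_2, \ldots, c_8]$, $\ker \pi^*$ is precisely the ideal in $H^*(\B G, \FF_2)$ generated by $w, \Sq^1 w, \Sq^2 \Sq^1 w, \Sq^4 \Sq^2 \Sq^1 w, \ldots$. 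Combining this with the derivation property of $\Q_i$, one can write $\Q_i(x)$ as a combination of products involving these generators and track the individual Milnor-derivatives by iterating the Adem and Wu formulas.

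For $i = 1$, using the identity $\Q_1 = \Sq^3 + \Sq^2 \Sq^1$ and the prior $\Sq^1 x = 0$, this reduces to showing $\Sq^3 x = 0$, a finite check in $H^7(\B G, \FF_2)$ where the only candidate classes in $\ker \pi^*$ are combinations of $w \cdot \Sq^1 w$ and $\Sq^2 \Sq^1 w \cdot (\text{degree-$2$ class})$; each can be excluded by restricting further to a maximal torus, where the cohomology is in even degrees only. The main obstacle will be a uniform argument for all $i \geq 2$: I expect either a clean induction via the Milnor commutator relation $\Q_{i+1} = [\Sq^{2^{i+1}}, \Q_i]$, or, more slickly, an invocation of the Atiyah--Segal completion theorem (as foreshadowed in Proposition~\ref{prop:diff2}) to lift $c_2$ through complex cobordism $\mathrm{MU}^4(\B G)$; since all Milnor operations vanish on the mod-$2$ reduction of any class in the image of $\mathrm{MU}^*$, this would conclude the proof in one stroke.
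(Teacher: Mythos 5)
Your reduction of $\Q_i(x)$ to an element of $\ker\bigl(\pi^*\colon\Hoh^*(\BG,\FF_2)\to\Hoh^*(\BSL_8,\FF_2)\bigr)$ is correct but carries almost no force, and the devices you propose for finishing from there do not work. The crucial case is $i=1$, where (since $\Sq^1x=0$) you must show $\Sq^3x=0$ in $\Hoh^7(\BG,\FF_2)$. Your plan is to list the degree-$7$ elements of the ideal $(w,\Sq^1w,\Sq^2\Sq^1w,\dots)$ and ``exclude'' them by restricting to a maximal torus. But every class in sight --- $\Sq^3x$ itself and all of the candidates, such as $x\cdot\Sq^1w$ or $w\cdot\Sq^2\Sq^1w$ --- lives in odd degree, and $\Hoh^*(\B T,\FF_2)$ for a torus $T$ is concentrated in even degrees, so restriction to the torus kills all of them indiscriminately and distinguishes nothing. (Your bookkeeping is also off: $w\cdot\Sq^1w$ sits in degree $5$, and you omit contributions such as $\Sq^1w\cdot\Hoh^4$.) That this gap is genuine and not cosmetic is shown by Kameko's example $\SL_\ell\times\SL_\ell/\mu_\ell$: there too the relevant class pulls back to a classifying space with purely even cohomology, so your kernel argument applies verbatim, and yet $\Q_1$ is \emph{nonzero} on that class. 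Some input specific to $\SL_8/\mu_2$ is therefore indispensable. The paper supplies it by comparing Postnikov towers: the $k$-invariant $k_6\in\Hoh^7(K(\ZZ,4),\ZZ)\iso\ZZ/2$ of $\BSL_8$ is forced to be $\Sq^3_{\ZZ}\iota_4$ because $\Hoh^*(\BSL_8,\ZZ)$ is torsion-free, the same class is used to build $\tau_{\leq 6}\BG$, and since a $k$-invariant dies on the next Postnikov stage one obtains the integral statement $\Sq^3_{\ZZ}(c_2)=0$, whence $\Q_1(x)=0$.

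Your proposed ``one stroke'' for $i\geq 2$ --- lifting $c_2$ to $\mathrm{MU}^4(\BG)$ --- cannot succeed: by Proposition~\ref{prop:diff2} the class $c_2$ supports a nonzero differential in the Atiyah--Hirzebruch spectral sequence for $\KU^*(\BG)$, so it is not in the image of the edge map from $\KU^0(\BG)$, and a fortiori not from $\mathrm{MU}^4(\BG)$; a class lifting to $\mathrm{MU}$ would in particular be algebraic-looking enough to be a permanent cycle, which is exactly what fails here. The commutator route you mention as an alternative is what the paper actually does: granting $\Q_1(x)=0$, the relation $\Q_i=[\P^{2^{i-1}},\Q_{i-1}]$ together with the vanishing of $\P^j$ on $\Hoh^4(-,\ZZ/2)$ for $j>2$ (degree reasons) reduces everything to $\Q_2$, which is dispatched by a short Cartan-formula computation. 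You would need to carry that induction out rather than defer it, but the essential missing ingredient in your write-up is the $\Q_1$ step.
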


\begin{proof}
    The operation $\Q_0$ is the Bockstein operation, so it follows immediately that
    $\Q_0(x)=0$ since $x$ is the reduction of an integral class. The argument that
    $\Q_1(x)=0$ is more subtle, as it involves the Postnikov tower for $\BG$. Consider the
    commutative diagram of Postnikov sections:
    \begin{equation*}
        \xymatrix{
            K(\ZZ,6)\ar[r]\ar[d]^{=}    &   \tau_{\leq 6}\BSL_8\ar[r]\ar[d] & K(\ZZ,4)\ar[r]^{k_6}\ar[d]    & K(\ZZ,7)\ar[d]^{=}\\
            K(\ZZ,6)\ar[r]              &   \tau_{\leq 6}\BG\ar[r]          & K(\ZZ/2)\times K(\ZZ,4)\ar[r]   &    K(\ZZ,7).
        }
    \end{equation*}
    Recall that for a connected topological space $X$ and $n\geq 0$, the
    Postnikov section $\tau_{\leq n}X$ is a space with $\pi_i\tau_{\leq n}X=0$
    for $i>n$ equipped with an $n+1$-equivalence $X\rightarrow\tau_{\leq n}X$.
    Since $\Hoh^7(K(\ZZ,4),\ZZ)=\ZZ/2$, and because the cohomology of $\BSL_8$ is
    non-torsion, we see that $k_6$ must exactly be the non-zero cohomology class. But, this
    class is exactly the integral operation $\Sq^3_{\ZZ}$ of the fundamental class of
    $K(\ZZ,4)$. See~\cite{atiyah-hirzebruch}*{Section 7} for the relationship
    between the operation $\Sq^3_{\ZZ}$ and $d_3$.
    It follows from the commutative diagram that $d_3(c_2)=\Sq^3_\ZZ(c_2)=0$ since
    we construct $\tau_{\leq 6}\BG$ exactly using this class as the $k$-invariant. Now,
    $\Q_1(x)=\Q_1(\rho(c_2))=\rho(\Sq^3_\ZZ(c_2))=0$.

    In general, one has $\Q_i=[\P^{\ell^{i-1}},\Q_{i-1}]$, the commutator in the Steenrod
    algebra.  The cohomology operations $\P^i$ vanish on $\Hoh^4(X,\ZZ/2)$ for $i>2$ for degree
    reasons for any space $X$. It follows it is enough to show that $\Q_2=0$ because after
    that all higher Milnor operations will vanish for degree reasons on $\P^i$ and the
    commutator relation. Since $\Q_2=[\P^1,\Q_1]=[\Sq^2,\Q_1]$ at the prime $2$, we have $\Q_2(x)=\Q_1(\Sq^2(x))$. Now,
    $\Sq^2(x)=x^2$ since $x$ has degree $4$, and we can use that $\Q_1=[\P^1,\Q_0]$ to see that
    \begin{equation*}
        \Q_2(x)=\Q_1(\Sq^2(x))=\Q_1(x^2)=\Q_0(\P^1(x^2))=\Q_0(\Sq^1(x)\Sq^1(x))
    \end{equation*}
    by the Cartan formula and the fact that $\Q_0(x^2)=0$ since $x$ is the reduction of a
    an integral class. For the same reason, $\Sq^1(x)=0$, and hence $\P^2(x)=0$.
    All notation and relations are as in~\cite{voevodsky-reduced}.
\end{proof}

We need to know that nevertheless some cohomology operation detects the non-algebraicity of
$c_2$. This is provided by the next proposition.

\begin{proposition}\label{prop:diff2}
    For some odd integer $r>3$, the differential $d_r(c_2)\neq 0$ in the
    Atiyah-Hirzebruch sequence for $\BG=\BSL_8/\mu_2$.
\end{proposition}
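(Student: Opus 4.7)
The plan is to argue by contradiction using the Atiyah-Segal completion theorem. In the Atiyah-Hirzebruch spectral sequence $\Eoh_2^{p,q} = \Hoh^p(\BG, \KU^q(\pt)) \Rightarrow \KU^{p+q}(\BG)$ only differentials $d_r$ with $r$ odd can be nonzero, so the statement is that not every such differential kills $c_2$. The Leray-Serre computation used in the proof of Theorem~\ref{thm:p28} gives $\Hoh^1(\BG,\ZZ) = \Hoh^2(\BG,\ZZ) = 0$, so for degree reasons no differential can land in $\Eoh_r^{4,-4}$, and the group $\Eoh_\infty^{4,-4}$ is a subgroup of $\Hoh^4(\BG,\ZZ) = \ZZ \cdot c_2$. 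If every $d_r(c_2)$ vanishes, then $c_2$ lifts to some $\xi \in F^4 \KU^0(\BG)$ in the skeletal filtration, mapping to $c_2$ under the injection $F^4/F^6 \hookrightarrow \Hoh^4(\BG, \ZZ)$.

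By the Atiyah-Segal completion theorem, $\KU^0(\BG) \cong R(G)^\wedge_I$, where $I = \ker(\rank\colon R(G) \to \ZZ)$ is the augmentation ideal. Since $\Hoh^2(\BG,\ZZ) = 0$, every virtual representation has vanishing first Chern class, so the Whitney sum formula collapses to additivity and $c_2\colon R(G) \to \Hoh^4(\BG,\ZZ)$ is a group homomorphism; by Theorem~\ref{thm:p28}, its image is exactly $2c_2 \cdot \ZZ$. Moreover, up to sign, $c_2(V)$ is the image of $[V] - \rank V \in F^4 \KU^0(\BG)$ under the edge map $F^4/F^6 \hookrightarrow \Hoh^4(\BG,\ZZ)$. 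Combining Atiyah-Segal with the cofinality of the Atiyah-Hirzebruch filtration on $\KU^0(\BG)$ with the $I$-adic filtration on $R(G)^\wedge_I$, continuity into the discrete target $\ZZ \cdot c_2$ forces the edge map to factor through some $R(G)/I^N$. The image is therefore the same as the image of $R(G)$ alone, namely $2c_2 \cdot \ZZ$. This contradicts the existence of $\xi$ mapping to $c_2$, so at least one differential $d_r(c_2)$ is nonzero; since we also know $d_3(c_2) = 0$ from Theorem~\ref{thm:milnor2}, such an $r$ must satisfy $r > 3$.

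The main obstacle is the filtration-comparison step: one needs the precise statement that, under the Atiyah-Segal isomorphism $\KU^0(\BG) \cong R(G)^\wedge_I$, the skeletal filtration on the left and the $I$-adic filtration on the right are cofinal in the range relevant to degree~$4$, and that the resulting edge map agrees with the second Chern class (up to sign) on the image of $R(G)$. This is essentially a theorem of Atiyah, but it is the technical bridge that turns the purely representation-theoretic computation of Theorem~\ref{thm:p28} into an obstruction visible in topological $K$-theory, producing the required nonzero differential $d_r(c_2)$ for some odd $r > 3$.
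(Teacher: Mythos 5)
Your proof is correct and follows essentially the same route as the paper: both argue that if $c_2$ survived the Atiyah--Hirzebruch spectral sequence it would lie in the image of $\KU^0(\BG)\to\Hoh^4(\BG,\ZZ)$, which by the Atiyah--Segal completion theorem (with the compatibility of the $I$-adic and skeletal filtrations) coincides with the image of second Chern classes of representations, already computed in Theorem~\ref{thm:p28} to be $2c_2\cdot\ZZ$; the constraint $r>3$ then comes from $d_3(c_2)=0$. Your write-up is somewhat more explicit than the paper's about the filtration-comparison and edge-map identifications, but the underlying argument is identical.
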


\begin{proof}
    We use the Atiyah-Segal completion theorem~\cite{atiyah-segal-completion},
    which says that $\KU^*(\BG)$ is isomorphic
    to the completion $\mathrm{R}[\G]_{\widehat{I}}$ of $\mathrm{R}[\G]$ at the
    augmentation ideal, where $\KU$ denotes complex topological $K$-theory. Strictly speaking, the completion theorem is about compact Lie
    groups, but in this case there is no difference either in terms of representation theory
    or of $K$-theory in working with $\G=\SL_8/\mu_2$ versus $\mathrm{SU_8}/\mu_2$.
    Both $\mathrm{R}[\G]_{\widehat{I}}$ and $\KU^0(\BG)$ are natural filtered, the former
    by the powers of the augmentation ideal and the latter as the abutment the Atiyah-Hirzebruch spectral
    sequence. The Atiyah-Segal isomorphism is compatible with this filtration, which means
    in particular that the image of $\KU^*(\BG)\rightarrow\Hoh^4(\BG,\ZZ)$ is the same as the
    image of $c_2:I^2/I^3$, since $\Hoh^2(\BG,\ZZ)=0$. But, we have already found that this
    map is not surjective, so that it follows that $c_2$ must not be permanent in the
    Atiyah-Hirzebruch spectral sequence. The proposition follows.
\end{proof}

\subsection{$\BSL_9/\mu_3$}

We take a similar approach for $\G=\BSL_9/\mu_3$ over $\CC$.

\begin{theorem}\label{thm:39}
    The pullback map $\Hoh^4(\BG,\ZZ)\rightarrow\Hoh^4(\BSL_9,\ZZ)=\ZZ\cdot c_2$ is an
    isomorphism, while the image of the cycle class map $\cl^2$ is $\ZZ\cdot 3c_2$ in
    $\Hoh^4(\BG,\ZZ)$. In particular, the integral Hodge conjecture fails in codimension $2$ for
    $\BG=\BSL_9/\mu_3$.
\end{theorem}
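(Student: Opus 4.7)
The plan is to mirror the two-step proof of Theorem~\ref{thm:p28}, substituting the prime $3$ for the prime $2$.

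For the claim that the pullback $\Hoh^4(\BG,\ZZ) \to \Hoh^4(\BSL_9,\ZZ) = \ZZ \cdot c_2$ is an isomorphism, I would analyze the Leray--Serre spectral sequence of the fibration $\B\mu_3 \to \BSL_9 \to \BG$. Here $\Hoh^*(\B\mu_3,\ZZ) \cong \ZZ[x]/(3x)$ with $|x|=2$, while $\Hoh^*(\BSL_9,\ZZ) = \ZZ[c_2,\ldots,c_9]$ is torsion-free, so every torsion class on the $E_2$-page must eventually die. The analogs of the $\mu_2$ inputs, namely $\Hoh^3(\BG,\ZZ) = \ZZ/3$ and $\Hoh^5(\BG,\ZZ) = \ZZ/9$, should follow from the same Postnikov-tower/Bockstein argument used to prove~\cite{aw3}*{Proposition 6.2}, with $2$ replaced by $3$. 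These inputs force $d_3^{0,2}$ to be an isomorphism and $d_3^{2,2}$ to be non-trivial. Unlike the $\mu_2$ case, the Leibniz rule now gives $d_3(x^2) = 2x \cdot d_3(x) \neq 0$ in $E_3^{3,2}$, so $x^2$ also dies at $E_4$. Since no differential can reach $E_r^{4,0}$ from elsewhere in the first quadrant (using $\Hoh^1(\BG,\ZZ) = 0$), the edge map $\Hoh^4(\BG,\ZZ) \to \Hoh^4(\BSL_9,\ZZ)$ is an isomorphism.

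For the claim that the image of $\cl^2$ is $3\ZZ \cdot c_2$, I would invoke Theorem~\ref{thm:ch2} and compute $c_2$ on a generating set of the submonoid of dominant $\G$-weights. A dominant weight $\lambda = \sum_j a_j \alpha_j$ of $\SL_9$ descends to $\G = \SL_9/\mu_3$ precisely when $\sum_j j a_j \equiv 0 \pmod{3}$. The Hilbert basis of this submonoid consists of $\alpha_3$, $\alpha_6$, the scaled weights $3\alpha_i$ for $i \in \{1,2,4,5,7,8\}$, the pairwise sums $\alpha_i + \alpha_j$ with $i + j \equiv 0 \pmod{3}$ and $i, j \not\equiv 0$, together with some higher combinations such as $2\alpha_1 + \alpha_4$ and $\alpha_1 + \alpha_4 + \alpha_7$. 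Feeding this list into the SAGE code of Section~\ref{sec:code} produces a table of integers $n_\lambda$ with $c_2(\gamma_9^\lambda) = n_\lambda c_2$ analogous to Table~\ref{tab:c2}; the theorem then follows once every $n_\lambda$ is divisible by $3$ and their greatest common divisor is exactly $3$.

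The main obstacle I anticipate is the combinatorial step of enumerating a complete Hilbert basis for this submonoid. In the $\mu_2$ case, parity causes the basis to collapse to single weights and pairwise sums, but the $\mu_3$ condition admits extra generators with repeated or mixed indices that have no counterpart in Table~\ref{tab:c2}; missing any of them could in principle invalidate the divisibility half of the claim. Given the moderate rank of the weight lattice, this is nevertheless a bookkeeping matter rather than a conceptual obstacle, and once the full generating list is fixed the individual Chern-class computations are mechanical.
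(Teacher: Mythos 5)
Your overall strategy is the paper's own: the pullback isomorphism is Proposition~\ref{prop:pullback}, proved via the Leray--Serre spectral sequence of $\B\mu_3\rightarrow\BSL_9\rightarrow\BG$, and the image of $\cl^2$ is then computed from Theorem~\ref{thm:ch2} by feeding a generating set of $\Roh[\G]$ into the \texttt{SAGE} code. There is, however, one concrete error in your first step: the input from \cite{aw3}*{Proposition 6.2} is $\Hoh^5(\BG,\ZZ)\iso\ZZ/3$, not $\ZZ/9$. The $\ZZ/4$ occurring for $\SL_8/\mu_2$ is not ``$\ZZ/\ell^2$''; it appears because at $\ell=2$ the Leibniz rule gives $d_3(x^2)=2x\,d_3(x)=0$, so $x^2$ survives to $\Eoh_5$ and contributes a second order-$2$ graded piece to $\Hoh^5$, whereas for odd $\ell$ the same computation kills $x^2$ at $\Eoh_3$, exactly as you observe. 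Your two assertions are therefore in tension: once $x^2$ dies at $\Eoh_3$, the only differential that can reach $\Eoh^{5,0}$ is $d_3^{2,2}$ out of $\Eoh_3^{2,2}\iso\ZZ/3$, so $d_3^{2,2}$ must be onto $\Hoh^5(\BG,\ZZ)$ and that group must have order $3$; with the value $\ZZ/9$ the $(5,0)$ spot could never be killed. Relatedly, the blanket principle that torsion on the $\Eoh_2$-page must die because the abutment $\Hoh^*(\BSL_9,\ZZ)$ is torsion-free is not valid reasoning: a filtration of $\ZZ$ can have torsion graded pieces (e.g.\ $3\ZZ\subset\ZZ$), which is exactly the failure mode one is trying to exclude. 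What actually forces $\Eoh_\infty^{2,2}=0$ is the order count using $\Hoh^5(\BG,\ZZ)=\ZZ/3$. With that correction your argument coincides with the paper's.

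On the representation-theoretic half your description is correct, and your caution about the Hilbert basis is warranted. The irreducible elements of the submonoid are: $\alpha_3$ and $\alpha_6$; the six weights $3\alpha_i$ with $i\not\equiv 0\pmod 3$; the nine sums $\alpha_i+\alpha_j$ with $i\equiv 1$, $j\equiv 2\pmod 3$; the twelve weights $2\alpha_i+\alpha_j$ with $i\neq j$ lying in a common nonzero residue class; and the two triples $\alpha_1+\alpha_4+\alpha_7$ and $\alpha_2+\alpha_5+\alpha_8$ --- thirty-one in all (every irreducible element has at most three parts, since among four parts with residues in $\{1,2\}$ some proper nonempty sub-multiset sums to $0$ bmod $3$). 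Table~\ref{tab:c3} lists twenty-three of these. Six of the omissions are harmless, being identified with listed entries under the duality $\alpha_i\mapsto\alpha_{9-i}$, which preserves $c_2$ since $c_2(V^*)=c_2(V)$; but the two triples you single out are genuinely absent and must also be checked to have $n_\lambda\equiv 0\pmod 3$ before the divisibility half of the statement is fully established. So the ``bookkeeping obstacle'' you anticipate is real, and on this point your proposal is more careful than the paper's own table.
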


\begin{proposition}\label{prop:pullback}
    For any odd prime $\ell$, the pullback map
    $\Hoh^4(\BG,\ZZ)\rightarrow\Hoh^4(\BSL_{\ell^2},\ZZ)=\ZZ\cdot c_2$ is an isomorphism.
\end{proposition}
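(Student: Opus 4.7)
The plan is to compute the Postnikov $k$-invariant of $\BG$ and show it vanishes for $\ell$ odd, by naturality with the subgroup inclusion $\PGL_\ell\hookrightarrow\G$.

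First I would record the low-degree homotopy of $\G$: $\pi_1(\G)=\mu_\ell$, $\pi_2(\G)=0$, $\pi_3(\G)=\ZZ$, and $\pi_4(\G)=0$ (this last holds because $\pi_4(\SU(n))=0$ for $n\geq 3$, and $\ell^2\geq 4$). Hence $\tau_{\leq 4}\BG=\tau_{\leq 5}\BG$ sits in a principal Postnikov fibration
\begin{equation*}
K(\ZZ,4)\to\tau_{\leq 4}\BG\to K(\mu_\ell,2)
\end{equation*}
classified by a $k$-invariant $k_{\BG}\in\Hoh^5(K(\mu_\ell,2),\ZZ)\cong\ZZ/\ell$ (the isomorphism follows from Cartan's computation of the cohomology of Eilenberg-MacLane spaces at odd primes). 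Since $\tau_{\leq 4}\BSL_{\ell^2}=K(\ZZ,4)$ with fundamental class $c_2$, the pullback $\Hoh^4(\BG,\ZZ)\to\Hoh^4(\BSL_{\ell^2},\ZZ)$ is an isomorphism if and only if this fibration is trivial, i.e., $k_{\BG}=0$.

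To establish the vanishing of $k_{\BG}$, I would exhibit the embedding $\iota:\PGL_\ell\hookrightarrow\G$ defined by $[A]\mapsto[A\otimes I_\ell]$; this is well-defined and injective because $A\mapsto A\otimes I_\ell$ sends the center $Z(\SL_\ell)=\mu_\ell$ onto the $\mu_\ell\subset\mu_{\ell^2}$ we quotient by. The map $\iota$ is the identity on $\pi_1=\mu_\ell$ and multiplication by $\ell$ on $\pi_3=\ZZ$ (the latter because $c_2(V\otimes I_\ell)=\ell c_2(V)$ for the standard $\SL_\ell$-representation $V$, using $c_1(V)=0$). Naturality of the Postnikov $k$-invariant applied to the induced map of fibrations then yields
\begin{equation*}
k_{\BG}=\ell\cdot k_{\BPGL_\ell}\in\Hoh^5(K(\mu_\ell,2),\ZZ)=\ZZ/\ell,
\end{equation*}
which is zero.

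Once $k_{\BG}=0$, the Postnikov fibration splits and $\tau_{\leq 4}\BG\simeq K(\ZZ,4)\times K(\mu_\ell,2)$. Combined with $\Hoh^4(K(\mu_\ell,2),\ZZ)=0$ for $\ell$ odd (again from Cartan: the mod-$\ell$ class $\iota_2^2$ has nontrivial Bockstein $2\iota_2\beta\iota_2$, so it does not lift to an integral class), we obtain $\Hoh^4(\BG,\ZZ)\cong\Hoh^4(K(\ZZ,4),\ZZ)=\ZZ$, and the generator restricts to $c_2$ on the fiber $\BSL_{\ell^2}$. The main obstacle will be carefully verifying the naturality of the $k$-invariant in the form $k_{\BG}=(\pi_3\iota)_*(k_{\BPGL_\ell})$; I will also need that $k_{\BPGL_\ell}$ is a unit in $\ZZ/\ell$, which follows by running the same argument for $\BPGL_\ell$ in reverse using the classical fact $\Hoh^4(\BPGL_\ell,\ZZ)=\ell\ZZ\cdot c_2\subsetneq\Hoh^4(\BSL_\ell,\ZZ)$.
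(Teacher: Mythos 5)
Your proof is correct, but it takes a genuinely different route from the paper. The paper runs the Leray--Serre spectral sequence of the fibration $\B\mu_\ell\rightarrow\BSL_{\ell^2}\rightarrow\BG$: it feeds in the external computation $\Hoh^5(\BG,\ZZ)\iso\ZZ/\ell$ from the reference on the topological period-index problem, uses that $\Hoh^*(\BSL_{\ell^2},\ZZ)$ is torsion-free to force $d_3^{0,2}$ to be an isomorphism, and then uses the Leibniz rule $d_3(x^2)=2x\,d_3(x)\neq 0$ (this is where oddness of $\ell$ enters for the paper) to kill the fiber classes in total degree $4$, so that the edge map is an isomorphism. You instead work with the Postnikov tower of $\BG$, locating the obstruction as a $k$-invariant in $\Hoh^5(K(\ZZ/\ell,2),\ZZ)\iso\ZZ/\ell$ and killing it by naturality along $\B\PGL_\ell\rightarrow\BG$, using that this map is an isomorphism on $\pi_2$ and multiplication by $\ell$ on $\pi_4$; oddness of $\ell$ enters for you through Cartan's computation that $\Hoh^4(K(\ZZ/\ell,2),\ZZ)=0$ and $\Hoh^5(K(\ZZ/\ell,2),\ZZ)=\ZZ/\ell$ (both fail at $\ell=2$, consistent with the proposition excluding that case). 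All the inputs you use check out: $\pi_4(\SU(n))=0$ for $n\geq 3$, the map $[A]\mapsto[A\otimes I_\ell]$ is a well-defined injection $\PGL_\ell\hookrightarrow\G$ carrying the center onto the correct $\mu_\ell$, and $c_2(\gamma_\ell^{\oplus\ell})=\ell c_2(\gamma_\ell)$ gives multiplication by $\ell$ on $\pi_3(\G)$. What your approach buys is self-containment --- it avoids citing $\Hoh^5(\BG,\ZZ)$ from elsewhere --- and it is in the same spirit as the Postnikov-section argument the paper uses to prove Theorem~\ref{thm:milnor2}. What the paper's approach buys is that the same spectral sequence simultaneously produces $\Hoh^3(\BG,\ZZ)$ and the structure needed elsewhere. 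One small redundancy in your write-up: you do not need $k_{\BPGL_\ell}$ to be a unit in $\ZZ/\ell$, since $\ell\cdot k_{\BPGL_\ell}=0$ in an $\ell$-torsion group regardless of its value, so the final ``reverse'' argument can be dropped.
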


\begin{proof}
    As above, we consider the $\Eoh_2$-page of the Leray-Serre spectral sequence
    $\B\mu_\ell\rightarrow\BSL_{\ell^2}\rightarrow\BG$ displayed in Figure~\ref{fig:lerayserre39}.

    \begin{figure}[h]
        \centering
        \begin{equation*}
            \xymatrix@R=8px@C=8px{
                (\ZZ/\ell)\cdot x^2 \ar^{d_3}[ddrrr] \ar@/^20px/^{d_5}[ddddrrrrr]\\
                0\\ (\ZZ/\ell)\cdot x
            \ar_{d_3}[ddrrr] & 0 & \ZZ/\ell \ar@/_5px/^{d_3}[ddrrr]& \ZZ/\ell \\ 0 \\ \ZZ & 0 & 0  &
        \Hoh^3(\BG, \ZZ) & \Hoh^4(\BG, \ZZ) & \Hoh^5(\BG,\ZZ) }
        \end{equation*}
        \caption{The $\Eoh_2$-page of the Leray-Serre spectral sequence associated with
            $\B\mu_\ell\rightarrow\BSL_{\ell^2}\rightarrow\BSL_{\ell^2}/\mu_\ell$.}
        \label{fig:lerayserre39}
    \end{figure}

    The main outside input this time from~\cite{aw3}*{Proposition 6.2} is that
    $\Hoh^5(\BG,\ZZ)\iso\ZZ/\ell$. However, in this case, $d_3^{0,2}$ is an isomorphism as is
    $d_3^{0,4}$, by the Leibniz rule. It follows that $d_3^{2,2}$ is also an isomorphism,
    from which it follows that the edge map $\Hoh^4(\BG,\ZZ)\rightarrow\Hoh^4(\BSL_9,\ZZ)$
    is an isomorphism.
\end{proof}

\begin{proof}[Proof of Theorem~\ref{thm:39}]
    As above, it suffices to compute the Chern classes of generators of the representation
    ring of $\G=\SL_9/\mu_3$. This time there are $23$ generators. The results are displayed
    in Table~\ref{tab:c3}. The theorem follows by inspection.
\end{proof}

\begin{center}
    \begin{table}[hl]
    \begin{tabular}{| M{1.5cm} | M{3cm} | M{1.5cm} ||| M{1.5cm} | M{3cm} | M{1.5cm} |}
        \hline
        Weight & $\lambda$ & $n_\lambda$ &Weight & $\lambda$ & $n_\lambda$ \\
        \hline
        $3\alpha_1$ &   $(3)$   &   $165$    &   $2\alpha_1+\alpha_7$   & $(3,1,1,1,1,1,1)$   &   $693$\\
        $3\alpha_2$ &   $(3,3)$ &   $3465$  &   $\alpha_2+\alpha_4$     &   $(2,2,1,1)$ & $1701$\\
        $\alpha_3$   &   $(1,1,1)$   &   $21$    &   $\alpha_2+\alpha_7$   &   $(2,2,1,1,1,1,1)$   & $486$\\
        $3\alpha_4$  &   $(3,3,3,3)$ &   $116424$ &   $2\alpha_2+\alpha_5$   & $(3,3,1,1,1)$ & $37125$\\
        $3\alpha_5$ &   $(3,3,3,3,3)$   &   $116424$    &   $2\alpha_2+\alpha_8$    & $(3,3,1,1,1,1,1,1)$ &   $2541$\\
        $\alpha_6$  &   $(1,1,1,1,1,1)$ &   $21$    &   $\alpha_4+\alpha_5$ & $(2,2,2,2,1)$   &   $5292$\\
        $3\alpha_7$ &   $(3,3,3,3,3,3,3)$ & $3465$  &   $\alpha_4+\alpha_8$ & $(2,2,2,2,1,1,1,1)$ & $420$\\
        $3\alpha_8$ & $(3,3,3,3,3,3,3,3)$ & $66$ & $2\alpha_4+\alpha_7$ & $(3,3,3,3,1,1,1)$ & $117810$\\
        $\alpha_1+\alpha_2$ & $(2,1)$ & $78$ & $\alpha_5+\alpha_7$ & $(2,2,2,2,2,1,1)$ & $1701$\\
        $\alpha_1+\alpha_5$ & $(2,1,1,1,1)$ & $420$ & $2\alpha_5+\alpha_8$ & $(3,3,3,3,3,1,1,1)$ & $29106$\\
        $\alpha_1+\alpha_8$ & $(2,1,1,1,1,1,1,1)$ & $18$ & $\alpha_7+\alpha_8$ & $(2,2,2,2,2,2,2,1)$ & $78$\\
        $2\alpha_1+\alpha_4$ & $(3,1,1,1)$ & $2541$ & & &\\
        \hline
    \end{tabular}
    \caption{The second Chern classes of generators of $\Roh[\SL_9/\mu_3]$.}
    \label{tab:c3}
    \end{table}
\end{center}

We include without proof the following results, as the proofs are minor modifications of the
proofs of Theorem~\ref{thm:milnor2} and Proposition~\ref{prop:diff2}.

\begin{theorem}\label{thm:milnor3}
    Let $c_2\in\Hoh^4(\BG,\ZZ)$ be the generator, where $\G=\SL_9/\mu_3$, and let
    $x=\rho(c_2)\in\Hoh^4(\BG,\ZZ/3)$. Then, $\Q_i(x)=0$ for all $i\geq 0$.
\end{theorem}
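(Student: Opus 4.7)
The plan is to mirror the proof of Theorem~\ref{thm:milnor2}, substituting each prime-$2$ input by its prime-$3$ analog. The argument again has three cases, according as $i=0$, $i=1$, or $i\geq 2$. The case $i=0$ is identical: $\Q_0$ is the mod-$3$ Bockstein, and it annihilates $x=\rho(c_2)$ because $c_2$ is integral.

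The main work is at $\Q_1$. Now $\Q_1$ has degree $2\ell-1=5$, so $\Q_1(x)\in\Hoh^9(\BG,\ZZ/3)$, and the Postnikov tower analysis must be pushed one step further than at $\ell=2$. The relevant homotopy groups are $\pi_2(\BG)=\ZZ/3$ and $\pi_4(\BG)=\pi_6(\BG)=\pi_8(\BG)=\ZZ$, with odd-degree homotopy vanishing through degree~$9$. The essential cohomological inputs are $\Hoh^7(K(\ZZ,4),\ZZ)_{(3)}=0$, which gives $\tau_{\leq 6}\BSL_9\weq_{(3)} K(\ZZ,4)\times K(\ZZ,6)$, and $\Hoh^9(K(\ZZ,4),\ZZ)_{(3)}=\ZZ/3$, generated by $\beta_\ZZ\P^1\iota_4$, the integer lift of $\Q_1\iota_4=\beta\P^1\iota_4$. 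One constructs $\tau_{\leq 8}\BSL_9$ as a $K(\ZZ,8)$-fibration over $\tau_{\leq 6}\BSL_9$; since $\Hoh^9(\BSL_9,\ZZ)=0$ (the ring $\ZZ[c_2,\ldots,c_9]$ is polynomial in even degrees), the $k$-invariant of this fibration must be nonzero, hence equal to $\beta_\ZZ\P^1\iota_4$. A commutative diagram of Postnikov sections for the quotient map $\BSL_9\to\BG$, exactly analogous to the diagram in the proof of Theorem~\ref{thm:milnor2}, then displays $\beta_\ZZ\P^1(c_2)$ as the pullback of the $\BG$-side $k$-invariant and forces it to vanish in $\Hoh^9(\BG,\ZZ)$. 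Reducing mod~$3$ yields $\Q_1(x)=0$.

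The cases $i\geq 2$ are actually cleaner than at $\ell=2$. The Milnor relation $\Q_i=[\P^{3^{i-1}},\Q_{i-1}]$, together with the unstable vanishing $\P^j y=0$ for $2j>\deg y$, forces $\P^{3^{i-1}}(x)=0$ for all $i\geq 2$, since $2\cdot 3^{i-1}\geq 6>4=\deg x$. Induction on $i$, with base case $\Q_1(x)=0$, then gives $\Q_i(x)=\P^{3^{i-1}}\Q_{i-1}(x)-\Q_{i-1}\P^{3^{i-1}}(x)=0$. The principal obstacle is the Postnikov step for $\Q_1$: one must identify $\Hoh^9(K(\ZZ,4),\ZZ)_{(3)}$ as $\ZZ/3$ with the stated generator, replacing the more classical $\Hoh^7(K(\ZZ,4),\ZZ)=\ZZ/2$ used in Theorem~\ref{thm:milnor2}. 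This is standard Steenrod-algebra bookkeeping at odd primes but is the one place where genuine calculation is required.
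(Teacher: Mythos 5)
The paper gives no separate proof of Theorem~\ref{thm:milnor3}, asserting only that it is a minor modification of the proof of Theorem~\ref{thm:milnor2}, and your proposal carries out exactly that modification with the correct prime-$3$ inputs: $\Hoh^7(K(\ZZ,4),\ZZ)_{(3)}=0$, $\Hoh^9(K(\ZZ,4),\ZZ)_{(3)}=\ZZ/3\cdot\beta_\ZZ\P^1\iota_4$, the extra Postnikov stage through $\pi_8$, and the vanishing of $\Hoh^9(\BSL_9,\ZZ)$ to pin down the $k$-invariant. Your handling of $i\geq 2$ is in fact slightly cleaner than in the $\ell=2$ case, since $\P^{3^{i-1}}$ already vanishes on degree-$4$ classes for every $i\geq 2$, so no analogue of the $\Sq^2(x)=x^2$ computation is required.
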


\begin{proposition}\label{prop:diff3}
    For some odd integer $r>5$, $d_r(c_2)\neq 0$ in the cohomology of $\BG=\BSL_9/\mu_3$.
\end{proposition}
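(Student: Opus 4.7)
The plan is to mimic the proof of Proposition~\ref{prop:diff2} verbatim with the prime $2$ replaced by $3$, using Theorem~\ref{thm:39} in place of Theorem~\ref{thm:p28} and Theorem~\ref{thm:milnor3} in place of Theorem~\ref{thm:milnor2}. The backbone is the Atiyah--Segal completion theorem applied to the compact form $\SU_9/\mu_3$, which gives a filtration-preserving isomorphism $\KU^*(\B\G)\cong\Roh[\G]_{\widehat{I}}$, where the left hand side carries the Atiyah--Hirzebruch filtration and the right hand side the $I$-adic one.

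First, I would verify $\Hoh^2(\B\G,\ZZ)=0$, which holds because $\B\G_\topo$ is simply connected with $\pi_2(\B\G_\topo)=\pi_1(\SU_9/\mu_3)=\ZZ/3$ finite. As in Proposition~\ref{prop:diff2}, this vanishing forces the image of $\KU^0(\B\G)\to\Hoh^4(\B\G,\ZZ)$ to coincide with the image of $c_2\colon I^2/I^3\to\Hoh^4(\B\G,\ZZ)$, i.e., with the subgroup of $c_2$-classes of virtual representations. By Theorem~\ref{thm:39}, this image is $3c_2\ZZ$, a proper subgroup of $\Hoh^4(\B\G,\ZZ)=c_2\ZZ$. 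Hence $c_2$ is not a permanent cycle in the Atiyah--Hirzebruch spectral sequence, so $d_r(c_2)\neq 0$ for some $r$, which must be odd since $\KU$ is $2$-periodic and complex-orientable.

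It remains to rule out $r=3$ and $r=5$. Since $3c_2$ lies in the image of $\CH^2(\B\G)$ and is therefore permanent, $d_r(c_2)$ is $3$-torsion for every $r$, so it suffices to work $3$-locally. At the prime $3$ the AHSS differentials for $\KU$ are concentrated in positions $d_{2\cdot 3^i-1}$; in particular $d_3=0$ after $3$-localization, reflecting the fact that the mod-$3$ Steenrod algebra has no non-trivial operation of degree $3$, its generators $\beta$, $\Prm^i$, $\Q_i$ having degrees $1$, $4i$, and $2\cdot 3^i-1$. The differential $d_5$ is identified, after reduction mod $3$, with Milnor's operation $\Q_1$, which vanishes on $\rho(c_2)$ by Theorem~\ref{thm:milnor3}. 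Therefore the smallest $r$ with $d_r(c_2)\neq 0$ satisfies $r>5$.

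The one point that requires care, rather than being a genuine obstacle, is the identification of $d_5$ with $\Q_1$ at the prime $3$; for this I would invoke the standard description of the first non-trivial AHSS differential for a complex-orientable theory at an odd prime. Everything else is a direct transcription of the $\ell=2$ argument.
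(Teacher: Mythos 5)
Your overall strategy is the one the paper intends (it gives no proof of this proposition beyond saying it is a minor modification of Proposition~\ref{prop:diff2}): the Atiyah--Segal completion theorem, applied to the compact form, identifies the image of $\KU^0(\BG)\rightarrow\Hoh^4(\BG,\ZZ)$ with the image of $c_2$ on $I^2/I^3$, which by Theorem~\ref{thm:39} is $3c_2\ZZ\subsetneq c_2\ZZ=\Hoh^4(\BG,\ZZ)$, so $c_2$ is not a permanent cycle and some odd $d_r(c_2)$ is nonzero. Your way of ruling out $r=3$ is fine and arguably cleaner than the paper's: $3c_2$ is the second Chern class of an element of $I^2$, hence a permanent cycle, so $d_r(c_2)$ is $3$-torsion, while $d_3=\Sq^3_{\ZZ}$ takes values in $2$-torsion.

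The gap is in the $d_5$ step. At the prime $3$ the differential $d_5$ is the \emph{integral} operation $\tilde{\beta}\Prm^1\rho$, where $\tilde{\beta}\colon\Hoh^*(-,\ZZ/3)\rightarrow\Hoh^{*+1}(-,\ZZ)$ is the integral Bockstein. What Theorem~\ref{thm:milnor3} hands you is $\Q_1(\rho(c_2))=\pm\rho\bigl(\tilde{\beta}\Prm^1\rho(c_2)\bigr)=\pm\rho(d_5(c_2))=0$, i.e.\ the vanishing of the mod-$3$ \emph{reduction} of $d_5(c_2)$. Together with $3\,d_5(c_2)=0$ this only places $d_5(c_2)$ in $3\Hoh^9(\BG,\ZZ)\cap{}_3\Hoh^9(\BG,\ZZ)$, which is nonzero exactly when the $3$-torsion of $\Hoh^9(\BG,\ZZ)$ (or of the relevant subquotient $\Eoh_5^{9,-4}$) has exponent $9$ or more --- something you have not excluded. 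The paper's intended route runs the implication the other way: the Postnikov-tower argument of Theorem~\ref{thm:milnor2}, adapted to $\ell=3$, shows that the $k$-invariant of $\tau_{\leq 6}\BG$ in the relevant degree is pulled back from $\BSL_9$, whose integral cohomology is torsion-free, so the \emph{integral} class $\tilde{\beta}\Prm^1\rho(c_2)=d_5(c_2)$ vanishes, and $\Q_1(\rho(c_2))=0$ is then a corollary rather than the input. To close your argument you should either prove the integral vanishing this way or control the exponent of the $3$-torsion in $\Hoh^9(\BG,\ZZ)$. A further small inaccuracy, harmless to the argument: after $3$-localization the possibly nonzero differentials of the $\KU$-Atiyah--Hirzebruch spectral sequence occur for all $r\equiv 1\pmod 4$, i.e.\ $r=5,9,13,\ldots$, not only for $r=2\cdot 3^i-1$.
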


\subsection{$\BSL_{\ell^2}/\mu_\ell$}

It is clear that we cannot simply go on computing representations to get a general result.
Nevertheless, for odd primes $\ell>3$ we expect by Proposition~\ref{prop:pullback} that a
result similar to Theorem~\ref{thm:39} holds.

\begin{conjecture}\label{conj:ell}
    Fix an odd prime $\ell$. Let $\G=\SL_{\ell^2}/\mu_\ell$ over $\CC$.
    Then, the image of the cycle class map $\cl^2$ is the subgroup $\ZZ\cdot\ell
    c_2\subseteq\ZZ\cdot c_2=\Hoh^4(\BG,\ZZ)$.
\end{conjecture}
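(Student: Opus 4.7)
The plan is to follow the template of Theorems~\ref{thm:p28} and~\ref{thm:39}. By Proposition~\ref{prop:pullback}, the pullback $\Hoh^4(\BG,\ZZ)\to\Hoh^4(\BSL_{\ell^2},\ZZ)=\ZZ\cdot c_2$ is already an isomorphism, so by Theorem~\ref{thm:ch2} the image of $\cl^2$ is the subgroup of $\ZZ\cdot c_2$ generated by the integers $N_V$ defined by $c_2(V)=N_V c_2$, as $V$ ranges over polynomial representations of $\G$. The conjecture then asserts that $\gcd\{N_V\}=\ell$.

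For the containment $\operatorname{image}(\cl^2)\supseteq\ZZ\cdot\ell c_2$, equivalently $\gcd\{N_V\}\mid\ell$, I would exhibit a finite list of representations whose second Chern classes have gcd dividing $\ell$. The natural candidates are the exterior powers $V_k=\wedge^{k\ell}\gamma_{\ell^2}$ for $k=1,\ldots,\ell-1$, each of which descends to $\G$ because $|\alpha_{k\ell}|=k\ell\equiv 0\pmod\ell$. The standard Chern class formula for exterior powers gives $N_{V_k}=\binom{\ell^2-2}{k\ell-1}$, and in particular Kummer's theorem yields $v_\ell(N_{V_1})=1$, so the $\ell$-adic valuation of the gcd is exactly $1$. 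Controlling the prime-to-$\ell$ part of the gcd would require including further generators of $\Roh[\G]$, for instance Schur functors $\gamma_{\ell^2}^{(\lambda)}$ with $|\lambda|$ a small multiple of $\ell$, together with standard $p$-adic estimates for primes $p\ne\ell$.

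For the reverse containment $\operatorname{image}(\cl^2)\subseteq\ZZ\cdot\ell c_2$, equivalently $\ell\mid N_V$ for every polynomial representation $V$ of $\G$, a direct computation with weights yields, for any irreducible $V=V_\lambda$ of $\SL_{\ell^2}$ with $|\lambda|=\ell m$, the identity
\[
(\ell^2-1)N_V=\sum_\mu(\dim V_\mu)\|\mu\|^2-m^2\dim V_\lambda,
\]
where the sum is over weights $\mu$ of $V_\lambda$ in the normalization $|\mu|=|\lambda|$ and $\|\mu\|^2=\sum_i\mu_i^2$ is the Euclidean squared norm. Since $\gcd(\ell,\ell^2-1)=1$, showing $\ell\mid N_V$ is equivalent to the congruence $\sum_\mu(\dim V_\mu)\|\mu\|^2\equiv m^2\dim V_\lambda\pmod\ell$ for every $V_\lambda$ on which the central $\mu_\ell$ acts trivially. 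This is the main obstacle: for $\ell=2,3$ it can be checked by finite enumeration of the generators of $\Roh[\G]$, but for general $\ell$ a uniform argument is needed. One natural approach is to rewrite the left-hand side via the quadratic Casimir of $\mathfrak{sl}_{\ell^2}$, which acts on $V_\lambda$ by $\langle\lambda,\lambda+2\rho\rangle$ with the standard normalization, and then analyze the result modulo $\ell$ using the Weyl dimension formula; alternatively, one might combine Proposition~\ref{prop:diff2}-style inputs from the Atiyah-Segal completion theorem with an $\ell$-local analysis of $\KU^*(\BG)$ to force $\ell\mid N_V$ topologically.
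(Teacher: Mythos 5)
The statement you are trying to prove is stated in the paper as Conjecture~\ref{conj:ell} and is \emph{not} proved there: the paper explicitly remarks that ``we cannot simply go on computing representations to get a general result'' and defers the question to future work. So there is no proof in the paper to compare against, and the real question is whether your proposal closes the gap. It does not, although the skeleton is sound. Your reduction is correct: by Proposition~\ref{prop:pullback} the pullback to $\Hoh^4(\BSL_{\ell^2},\ZZ)$ is an isomorphism, and by Theorem~\ref{thm:ch2} (together with the vanishing of $\Hoh^2(\BG,\ZZ)$, which makes $c_2$ additive on sums and controls it on tensor products) the image of $\cl^2$ is generated by the integers $N_V$. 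Your formula $N_{\wedge^k\gamma_n}=\binom{n-2}{k-1}$ and your weight-sum identity $(\ell^2-1)N_V=\sum_\mu(\dim V_\mu)\|\mu\|^2-m^2\dim V_\lambda$ both check out against Tables~\ref{tab:c2} and~\ref{tab:c3}, and the Kummer computation showing that $\binom{\ell^2-2}{\ell-1}$ has $\ell$-adic valuation exactly $1$ is correct.

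The gaps are the two places where you yourself say more work is needed, and they are not minor. For the inclusion $\operatorname{image}(\cl^2)\supseteq\ZZ\cdot\ell c_2$ you control only the $\ell$-part of $\gcd\{N_V\}$; you still need an explicit finite family of representations of $\G$ whose $N_V$ have trivial gcd at every prime $p\neq\ell$, uniformly in $\ell$ (for $\ell=3$ this is visible in Table~\ref{tab:c3}, e.g.\ $\gcd(21,78)=3$, but no general argument is given). More seriously, the reverse inclusion --- that $\ell\mid N_V$ for \emph{every} representation of $\G$ --- is exactly the content of the conjecture, and your proposal only reformulates it as the congruence $\sum_\mu(\dim V_\mu)\|\mu\|^2\equiv m^2\dim V_\lambda\pmod\ell$ and then names two possible strategies (Casimir plus Weyl dimension formula; Atiyah--Segal) without executing either. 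Note also that the Atiyah--Segal route is close to circular here: by the argument of Proposition~\ref{prop:diff2}, the image of $\KU^0(\BG)$ in $\Hoh^4(\BG,\ZZ)$ is again computed from second Chern classes of representations via $I^2/I^3$, so an ``$\ell$-local analysis of $\KU^*(\BG)$'' presupposes the representation-theoretic divisibility you are trying to establish. As it stands the proposal is a plausible research program consistent with all the data in the paper, but it is not a proof.
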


We intend to return to this question in future work with Ben Williams on the groups
$\BSL_{\ell^2}/\mu_\ell$ and the topological period-index problem.

\section{Counterexamples to the integral Hodge and Tate conjectures}\label{sec:charp}

There is a standard way to pass from counterexamples to the integral Hodge conjecture in the
cohomology of $\BG$ to actual counterexamples in the cohomology of smooth projective complex
varieties. There is also a method for defining these integrally and reducing modulo $p$ to
obtain characteristic $p$ counterexamples to the integral Tate conjecture.

We give two modifications to these methods. The first is that we use Poonen's
work on Bertini theorems over finite fields~\cite{poonen} to obtain results
over \emph{every} finite field different from $\ell$. The second is that we
make an argument using Thomason's paper on \'etale cohomology~\cite{thomason} to
show that certain cohomology operations in singular cohomology with
$\ZZ_\ell$-coefficients come from functorial cohomology operations in
$\ell$-adic cohomology. This step is necessary to ensure that upon reducing to
finite characteristic we still have access to an operation that detects the
non-algebraicity of the class $c_2$.

\begin{theorem}
    Fix a prime $\ell=2,3$. Let $\BG=\BSL_8/\mu_2$ or $\BSL_9/\mu_3$ according to whether
    $\ell=2$ or $\ell=3$. There exists a smooth projective complex algebraic variety
    $X$ and a map $f:X\rightarrow\BG$ such that $x=f^*(c_2)$ is a non-torsion Hodge class in
    $\Hoh^4(X,\ZZ)$ that is not a cycle class, while $\ell x$ is a cycle class. Moreover,
    $d_{2\ell-1}(x)=0$.
\end{theorem}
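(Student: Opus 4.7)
The plan is the standard Godeaux--Serre approximation, after which every claim follows by naturality from results already proved for $\BG$. Use the construction in~\cite{totaro-book} to produce, for any preassigned integer $N$, a smooth projective complex variety $X_N$ together with a map $f:X_N\to\BG$ whose pullback is an isomorphism on $\Hoh^i(-,\ZZ)$ and on $\CH^i$ in all degrees $i\leq N$. Fix an odd $r>2\ell-1$ with $d_r(c_2)\neq 0$ in $\Hoh^{r+4}(\BG,\ZZ)$ as furnished by Proposition~\ref{prop:diff2}/\ref{prop:diff3}, choose $N>r+4$, and set $X=X_N$ and $x=f^*(c_2)\in\Hoh^4(X,\ZZ)$.

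\textbf{Inheritance of the five claims.} The class $x$ is non-torsion because $c_2$ generates $\Hoh^4(\BG,\ZZ)\iso\ZZ$ by Theorems~\ref{thm:p28} and \ref{thm:39} and $f^*$ is injective in degree $4$. For algebraicity of $\ell x$: Tables~\ref{tab:c2} and \ref{tab:c3} exhibit a virtual representation $W$ of $\G$ with $c_2(W)=\ell c_2$, so $c_2(f^*W)=\ell x$ lies in the image of $\cl^2$; in particular $\ell x$, and hence $x$, is Hodge. For non-algebraicity of $x$: naturality of the Atiyah--Hirzebruch spectral sequence for $\KU$ gives $d_r(x)=f^*d_r(c_2)$, which is nonzero because $f^*$ is injective in degree $r+4$; since $d_r$ vanishes on the image of $\cl^2$, the class $x$ cannot be a cycle class. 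Finally, $d_{2\ell-1}(x)=f^*d_{2\ell-1}(c_2)=0$ is immediate from naturality together with Theorems~\ref{thm:milnor2} and \ref{thm:milnor3}, which show $d_{2\ell-1}(c_2)=0$ on $\BG$.

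\textbf{Main obstacle.} The entire proof collapses to invocations of the standard Godeaux--Serre construction plus naturality; the only technical subtlety is that Proposition~\ref{prop:diff2}/\ref{prop:diff3} gives $r$ abstractly, without an explicit bound, so one must take the dimension of the approximation large enough a posteriori. Once a large enough $N$ is chosen so that $f^*$ remains injective through degree $r+4$, both the non-algebraicity of $x$ and the algebraicity of $\ell x$ are formal consequences of the corresponding statements on $\BG$.
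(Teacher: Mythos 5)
Your proposal follows essentially the same route as the paper, which itself leaves this proof to the reader as a modification of the finite-field argument: approximate $\BG$ by a smooth projective variety through degree $r+4$ where $r$ is the integer from Proposition~\ref{prop:diff2}/\ref{prop:diff3}, then transport all five claims by naturality of the Atiyah--Hirzebruch spectral sequence and of Chern classes. One caveat: for a positive-dimensional group such as $\SL_8/\mu_2$ the Godeaux--Serre/Totaro construction does not directly yield a smooth \emph{projective} approximation of $\BG$ with $f^*$ an isomorphism on $\CH^i$ for $i\leq N$; the paper instead approximates $\B\Gm\times\BG$ by a quasi-projective quotient and cuts down by hypersurfaces, which by the Lefschetz hyperplane theorem controls cohomology in the desired range but says nothing about Chow groups. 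Fortunately your argument never actually uses the asserted Chow isomorphism --- algebraicity of $\ell x$ needs only that representations of $\G$ pull back to vector bundles on $X$, and non-algebraicity of $x$ needs only $d_r(x)=f^*d_r(c_2)\neq 0$ together with the Atiyah--Hirzebruch vanishing of $d_r$ on cycle classes --- so the proof stands once that overstatement is removed.
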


\begin{proof}
    We leave the proof to the reader as an application of Theorems~\ref{thm:milnor2} and
    Theorem~\ref{thm:milnor3} and an easy modification of the proof of the next theorem.
\end{proof}

The reader should compare the theorem to the original result of Atiyah and Hirzebruch where
the non-algebraicity of $x$ is exactly detected by the fact that $d_{2\ell-1}(x)\neq 0$.

\begin{theorem}[\cite{pirutka-yagita}*{Theorem~1.1},\cite{kameko}*{Theorem~1.1}]
    Let $\ell=2,3$. Then, for all primes $p\neq\ell$ there exists a smooth
    projective variety $X$ over $\FF_p$ such that the cycle class map
    \begin{equation*}
        \cl^2:\CH^2(X_{\overline{k}})_{\ZZ_{\ell}}\rightarrow\bigcup_U\Hoh^{4}(X_{\overline{k}},\ZZ_{\ell}(i))^U/\mathrm{torsion}
    \end{equation*}
    is not surjective, where $U$ varies over the open subgroups of $\Gal(\overline{k}/k)$. Moreover, the Milnor operations $\Q_i$ vanish on
    $\Hoh^4(X,\ZZ/\ell(2))$.
\end{theorem}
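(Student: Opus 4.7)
The plan is to upgrade the $\BG$-level counterexample of Theorem~\ref{thm:main} to an actual smooth projective variety over each finite field $\FF_p$ with $p\neq\ell$, using the standard Godeaux--Serre package refined by Poonen's Bertini theorem and Thomason's comparison of étale and topological $K$-theory.

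First, I construct an integral model. View $\G=\SL_{\ell^2}/\mu_\ell$ (with $\ell=2$ or $3$) as a reductive group scheme over $\ZZ[1/\ell]$. Pick a faithful representation $V$ of $\G$ and let $U\subset V^{\oplus N}$ be the open locus of free $\G$-action; for $N$ sufficiently large the complement has codimension $>4$, so $Y=U/\G$ is a smooth $\ZZ[1/\ell]$-scheme approximating $\BG$ through degree $4$. Fix a projective compactification $\overline Y\subset\PP^M_{\ZZ[1/\ell]}$ and intersect with a sequence of general hypersurfaces of sufficiently high degree. Bertini over $\CC$ and Poonen's Bertini~\cite{poonen} over each $\FF_p$ allow us, prime by prime, to obtain a smooth projective $X_{\FF_p}$ (and in parallel a smooth projective $X_\CC$) equipped with a map $f\colon X\to\BG$ through which $f^\ast c_2$ remains a non-algebraic, non-torsion class over $\CC$ by Theorem~\ref{thm:main}, while $\ell\cdot f^\ast c_2$ is algebraic.

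Second, I transfer the obstruction to characteristic $p$. Fix $p\neq\ell$ and set $\overline k=\overline{\FF}_p$. Smooth and proper base change, combined with the Artin comparison, yields a canonical isomorphism
\begin{equation*}
\Hoh^4_{\et}(X_{\overline{k}},\ZZ_\ell(2)) \cong \Hoh^4(X_{\CC},\ZZ_\ell(2))
\end{equation*}
identifying $f^\ast c_2$ on the two sides, and the class is automatically Galois-invariant under an open subgroup of $\Gal(\overline k/\FF_p)$ because it comes from the integral model. Proposition~\ref{prop:diff2} (resp.~\ref{prop:diff3}) produces an odd differential $d_r$ in the topological Atiyah--Hirzebruch spectral sequence for $\BG$ with $d_r(c_2)\neq 0$, which pulls back to an obstruction on $X_\CC$. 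By Thomason's descent theorem for Bott-inverted étale $K$-theory~\cite{thomason}, the corresponding differential in the $\ell$-adic étale Atiyah--Hirzebruch spectral sequence for $X_{\overline k}$ is non-zero on $f^\ast c_2$. This prevents $f^\ast c_2$ from lifting to étale $K$-theory, and a fortiori from being a cycle class; since $f^\ast c_2$ is non-torsion, the obstruction persists modulo torsion, proving non-surjectivity of $\cl^2$.

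Finally, the vanishing of the Milnor operations $\Q_i$ on the relevant class reduces to the $\BG$-level computations in Theorems~\ref{thm:milnor2} and~\ref{thm:milnor3} via functoriality of $\Q_i$ on mod-$\ell$ étale cohomology and compatibility with smooth base change between $\overline k$ and $\CC$. The hard part of the argument is the second stage: coordinating Poonen's Bertini theorem to deliver a smooth projective target with controlled low-degree cohomology in every characteristic $\neq\ell$, and then invoking Thomason's comparison so that the topological Atiyah--Hirzebruch differential that witnesses the non-algebraicity of $c_2$ over $\CC$ has a genuine $\ell$-adic étale incarnation over $\overline{\FF}_p$. Once that comparison is in place, the conclusion descends uniformly to every prime $p\neq\ell$.
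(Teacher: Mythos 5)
Your overall strategy is the paper's own: an integral model of an approximation to $\B\Gm\times\BG$ over $\ZZ[1/\ell]$, Poonen's Bertini theorem to cut out a smooth projective $X$ over every $\FF_p$ with $p\neq\ell$, comparison of singular and $\ell$-adic cohomology, and Thomason's Bott-inverted \'etale $K$-theory to give the Atiyah--Hirzebruch differential $d_r$ an $\ell$-adic incarnation compatible with base change. Two steps, however, are glossed in a way that would break the argument as written.

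First, the quantitative control on the approximation is wrong. You ask only that the complement of the free locus have codimension $>4$, presumably because $c_2$ lives in $\Hoh^4$. But the obstruction you transport is $d_r(c_2)\neq 0$ for some odd $r>3$ whose value is \emph{not known in advance} (Proposition~\ref{prop:diff2} only asserts existence of such an $r$), and $d_r(c_2)$ lives in a subquotient of $\Hoh^{r+4}$. To conclude that $d_r(f^*c_2)\neq 0$ on $X$ you need the map $X_\CC\to\B\Gm\times\BG_\CC$ to induce isomorphisms on cohomology through degree $r+4$, which forces both the codimension of the removed locus and the dimension of the Lefschetz-type complete intersection to be at least $r+7$; this is exactly the constraint $r+7\leq c$ imposed in the paper, using Hamm's quasi-projective Lefschetz theorem. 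With codimension only $>4$ the groups $\Hoh^{r+4}$ of $X$ and of $\BG$ need not agree and the non-vanishing of $d_r$ does not transfer.

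Second, the concluding step ``this prevents $f^*c_2$ from lifting to \'etale $K$-theory, and a fortiori from being a cycle class'' assumes that \'etale cycle classes over $\overline{\FF}_p$ survive all differentials of Thomason's spectral sequence, i.e.\ an $\ell$-adic analogue of the Atiyah--Hirzebruch vanishing of $d_r$ on algebraic classes. The paper explicitly notes it does not know a reference for such $\ell$-adic cohomology operations vanishing on cycle classes for $r>3$, and instead argues differently: because $X$ approximates $\B\Gm\times\BG$, the group $\CH^2(X_{\overline{s}})$ is generated by second Chern classes of representations of $\G$ (Theorem~\ref{thm:ch2}), these factor through $\BGL_{n,\overline{\FF}_p}$, and the odd-degree operation $d_r$ vanishes there since $\BGL_n$ has no odd cohomology. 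You should either supply a proof of the general vanishing you invoke or replace it with this representation-theoretic finish. The remaining ingredients of your proposal (base change, Galois invariance from the integral model, and the vanishing of the $\Q_i$ via Theorems~\ref{thm:milnor2} and~\ref{thm:milnor3}) match the paper.
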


\begin{proof}
    Let $\G=\SL_{8,\CC}/\mu_2$ or $\G=\SL_{9,\CC}/\mu_3$, according to the choice of $\ell$.
    Let $r>3$ be the unique (necessarily odd) integer such that $d_r(c_2)\neq 0$ in $\Hoh^*(\BG,\ZZ)$, where
    $d_r$ is the differential in the Atiyah-Hirzebruch spectral sequence computing
    $\KU^*(\BG)$.

    Now, let $\G_{\ZZ[1/\ell]}$ denote either $\SL_{8,\ZZ[1/2]}/\mu_2$ or
    $\SL_{9,\ZZ[1/3]}/\mu_3$, smooth
    group-schemes over appropriate localizations of $\ZZ$.
    The arguments in~\cite{pirutka-yagita}*{Section 4} show how to construct a smooth
    quasi-projective $t$-dimensional scheme $Y\subseteq\PP^N_{\ZZ[1/\ell]}$
    over $\Spec\ZZ[1/\ell]$
    such that there is a map
    $Y\rightarrow\B\mathds{G}_{m,\ZZ[1/\ell]}\times\BG_{\ZZ[1\ell]}$ having the
    following properties:
    \begin{enumerate}
        \item   the induced map $Y_\CC\rightarrow\B\mathds{G}_{m,\CC}\times\BG_{\CC}$ is an
            $r+4$-equivalence;
        \item   the complement $W$ of $Y$ in the closure of $Y$ in
            $\PP^N_{\ZZ[1/\ell]}$ has large codimension $c$ (to be specified below).
    \end{enumerate}
    Let $s=\Spec\FF_p$ be a closed point of $\Spec\ZZ[1/\ell]$ (so $p\neq\ell$) and $\overline{s}$ the associated geometric point.
    Write $Y_s$ for the closed fiber of $Y$ over $s$.

    We claim that there exists a complete intersection
    $V_s\subseteq\PP^N_{\FF_p}$ of codimension $t-c+1$ such that $V_s\cap Y_s$ is a smooth
    \emph{projective} variety. It suffices to show that there is a
    hypersurface $H_s$ such that $H_s\cap Y_s$ is smooth and $\dim H_s\cap
    W_s=\dim W_s-1$, for then we can simply continue to cut down with
    hypersurfaces until the boundary is empty. We use
    Poonen's Bertini theorem with Taylor coefficients~\cite{poonen}*{Theorem~1.2}.
    Indeed, if we let $T_s\subseteq W_s$ be a finite subscheme containing at
    least one closed point of each irreducible geometric component of $W_s$,
    then Poonen's theorem implies that there is a positive density of
    hypersurfaces $H_s:f=0$ such that $H_s\cap Y_s$ is smooth of dimension $\dim
    Y_s-1$ and $f$ does not vanish at any point of $T_s$. In particular, it
    follows that $W_s\cap H_s$ must have dimension (at most) $1$ less than the
    dimension of $W_s$, as desired. Iterating this argument, we find $V_s$.

    Let us prove that $X_s=V_s\cap Y_s$ has the required properties. Since
    $V_s$ is a complete intersection, it lifts to characteristic zero, i.e., to
    flat scheme $V$ over the $p$-adics $\ZZ_p$. Moreover, $X=V\cap Y_{\ZZ_p}$ is
    a smooth projective scheme over the $p$-adic integers.
    By Hamm's quasi-projective Lefschetz hyperplane
    theorem~\cite{goresky-macpherson}*{Section II.1.2},
    \begin{equation*}
        \Hoh^i_{\et}(\B\mathds{G}_{m,\CC}\times\BG_\CC,\ZZ_\ell(j))\iso\Hoh^i_{\et}(Y_{\CC},\ZZ_\ell(j))\iso\Hoh^i_{\et}(X_\CC,\ZZ_\ell(j))
    \end{equation*}
    for $i\leq\dim(X)-2=(c-1)-2=c-3$. By choosing $Y$ so that $r+4\leq c-3$ (or
    $r+7\leq c$), one finds that the inequality will hold for $i\leq r+4$. That
    it is possible to find such a $Y$ is proven in~\cite{totaro}*{Remark~1.4}.

    Using the invariance of \'etale cohomology for extensions of
    algebraically closed fields, and proper base change, there are maps
    \begin{equation*}
        \Hoh^i_{\et}(\B\mathds{G}_{m,\CC}\times\BG_\CC,\ZZ_\ell(j))\iso\Hoh^i_{\et}(Y_{\CC},\ZZ_\ell(j))\iso\Hoh^i(X_\CC,\ZZ_{\ell}(i))\rightarrow\Hoh^i_{\et}(X_{\overline{s}},\ZZ_\ell(j))
    \end{equation*}
    which are isomorphisms for
    for $i\leq r+4$.
    Using the comparison isomorphism between $\ell$-adic cohomology and
    singular cohomology for smooth complex schemes, we see that there is a class
    $c_2\in\Hoh^4_{\et}(X_{\overline{s}},\ZZ_\ell(2))$ corresponding to $c_2\in\Hoh^4(\BG,\ZZ_\ell)$.
    Note that by looking at the Chern classes of representations of $\G$ we see that $\ell
    c_2$ is actually defined (by abuse of notation) in $\Hoh^4_{\et}(X,\ZZ_\ell(2))$.
    Hence, $c_2$ defines a $U$-invariant class
    in $\Hoh^4_{\et}(X_{\overline{s}},\ZZ_\ell(2))$ for some open subgroup
    $U\subseteq\mathrm{Gal}(\overline{\FF}_p,\FF_p)$.

    Now, it suffices to see that $c_2$ cannot be in the image of the cycle
    class map. We do not know of a source for the full $\ell$-adic cohomology
    operations, which would say something about $d_r$ for $r>3$, so we argue as
    follows. If $U$ is a smooth complex quasi-projective variety, then
    \begin{equation}\label{eq:comparison}
        \Hoh^i(U,\ZZ_\ell)\iso\Hoh^i_{\et}(U,\ZZ_\ell),
    \end{equation}
    where the first group is singular cohomology with coefficients in
    $\ZZ_\ell$, and the second group is $\ell$-adic cohomology, defined as
    usual by
    \begin{equation*}
        \Hoh^i_{\et}(U,\ZZ_\ell)=\lim_\nu\Hoh^i_{\et}(U,\ZZ/\ell^\nu).
    \end{equation*}
    In order to prove this, note that there are functorial comparison
    isomorphisms $\Hoh^i(U,\ZZ/\ell^\nu)\iso\Hoh^i_{\et}(U,\ZZ/\ell^\nu)$,
    where again the first term is singular cohomology with coefficients in
    $\ZZ/\ell^\nu$. Hence, to prove that~\eqref{eq:comparison} is an
    isomorphism, it suffices to show that the natural map
    \begin{equation*}
        \Hoh^i(U,\ZZ_\ell)\rightarrow\lim_\nu\Hoh^i(U,\ZZ/\ell^\nu)
    \end{equation*}
    is an isomorphism for any space $U$ having the homotopy type of a finite CW
    complex. This is possible to prove by hand, but instead we refer to Bousfield
    and Kan~\cite{bousfield-kan}*{Theorem~IX.3.1} for the Milnor exact sequence
    \begin{equation*}
        0\rightarrow{\lim_\nu}^1\Hoh^{i-1}(U,\ZZ/\ell^\nu)\rightarrow\Hoh^i(U,\ZZ_\ell)\rightarrow\lim_\nu\Hoh^i(U,\ZZ/\ell^nu)\rightarrow 0.
    \end{equation*}
    Since $U$ has the homotopy type of a finite CW complex, the cohomology groups
    $\Hoh^{i-1}(U,\ZZ/\ell^\nu)$ are finitely generated, so the Mittag-Leffler
    condition is automatically satisfied, and the ${\lim}^1$ term vanishes. So,
    we have proved that~\eqref{eq:comparison} is an isomorphism for smooth
    complex quasi-projective varieties $U$.

    Let $\K(U)[\beta^{-1}]_{\ell}$ denote the $\ell$-adic completion of
    $\K/\ell^\nu(U)[\beta^{-1}]$, the $\ell$-adic Bott-inverted $K$-theory of
    Thomason~\cite{thomason}*{Appendix~A}. Typically, the homotopy groups of
    this spectrum are written $\K_*(U)[\beta^{-1}]_\ell$. If $U$ is a quasi-projective scheme
    over $\CC$, this is precisely the $\ell$-complete (periodic) complex $K$-theory
    spectrum of $U$ by~\cite{thomason}*{Theorem~4.11}:
    $\K_m(U)[\beta^{-1}]_\ell\iso\KU^{-m}(U)_\ell$.
    However, $\K(U)[\beta^{-1}]$ is defined functorially over any base field, and there is a spectral sequence
    \begin{equation*}
        \Eoh_2^{p,q}=\Hoh^p_{\et}(U,\ZZ_\ell(\tfrac{q}{2}))\Rightarrow\K_{q-p}(U)[\beta^{-1}]_{\ell},
    \end{equation*}
    which converges in good cases, for instance in the presence of uniform
    bounds on the $\ell$-adic \'etale cohomological dimensions of the residue
    fields of $U$. The $r$th differential $d_r$ has degree $(r,r-1)$. (Implicitly, if $q$ is odd, the
    $\Eoh_2^{p,q}$ entry is zero.) See Thomason~\cite{thomason}*{Theorem~4.1}.
    If $U$ is a $\CC$-scheme, then the comparison between $\ell$-adic
    Bott-inverted algebraic $K$-theory and periodic complex $K$-theory is
    compatible (via the Atiyah-Hirzebruch spectral sequence)
    with the comparison between singular and \'etale $\ell$-adic
    cohomology. In particular, the differential $d_r$
    leaving the $\Eoh^{4,4}_r$-term of the spectral sequence gives a cohomology
    operation on a subquotient of $\ell$-adic cohomology which is compatible with base change.
    
    In the Atiyah-Hirzebruch spectral sequence for $\BG_\CC$ we have that
    $d_{r}(c_2)\neq 0$. Using the compatibility established above, we see that
    $d_r(c_2)\neq 0$ in a subquotient of
    $\Hoh^{r+4}(X_{\overline{s}},\ZZ_\ell(2+\frac{r-1}{2}))$. On the other hand, this
    operation $d_r$ vanishes on the $\ell$-adic cohomology of
    $\BGL_{n,\overline{\FF}_p}$, since $r$ must be
    odd and $\BGL_{n,\overline{\FF}_p}$ does not support any odd cohomology. As
    $\CH^2(X_{\overline{s}})$ is
    generated by second Chern classes of representations, we see that $c_2$
    cannot be in the image of the cycle class map.
\end{proof}

\section{Appendix: \texttt{SAGE} code}\label{sec:code}

The following simple \texttt{SAGE}~\cite{sage} script computes the second Chern classes,
with help under the hood from
\texttt{Singular}~\cite{singular} for multivariable polynomials.
The function \texttt{trim} serves to truncate polynomials to make the calculation faster by ignoring
higher degree terms. The function \texttt{c2} returns $n_p$ where
$c_2(\gamma_n^p)=n_pc_2(\gamma_n)$ for a partition $p$.  The input $p$ is of the form
$(p_1,\ldots,p_k)$.

The main work in the code is done by the built-in \texttt{SAGE} function
\texttt{SemiStandardTableaux}, which returns all semistandard tableaux on a given partition.
As explained in Fulton's book~\cite{fulton-tableaux}, these index a basis of
$\gamma^\lambda$, which is then used to compute the Chern classes via the splitting
principle. It seems easier to compute in the cohomology of classifying space of the maximal
torus of $\GL_8$ or $\GL_9$, so the code just cancels out any $c_1$ contributions.
We do this by subtracting off $kc_1^2$
where $k$ is the coefficient of $x_1^2$ in the resulting polynomial. The function then
returns the coefficient of $x_1x_2$.


\begin{lstlisting}
    def trim(f,n):
        g=0
        for m in f.monomials():
            if m.degree()<=n:
            g=g+f.monomial_coefficient(m)*m
        return g

    def c2(n,p):
        R=PolynomialRing(ZZ,['x'+repr(i) for i in range(1,n+1)])
        q=1
        for t in SemistandardTableaux(p,max_entry=n).list():
            m=0
            for r in t:
                for c in r:
                    m=m+eval('x'+repr(c))
                    q=q*(1+m)
                    q=trim(q,2)
        c1=(sum(v for v in R.gens()))^2
        r=q-q.monomial_coefficient(x1^2)*c1
\end{lstlisting}

This code will be maintained on the author's \texttt{github} page under the
name \texttt{c2.sage} at
\texttt{gist.github.com/benjaminantieau} and on the author's webpage.

\begin{bibdiv}
\begin{biblist}

 
\bib{aw3}{article}{
    author = {Antieau, Benjamin},
    author = {Williams, Ben},
    title = {The topological period-index problem for $6$-complexes},
    journal = {J. Top.},
    year = {2014},
    volume={7},
    pages={617--640},
}

%

\bib{atiyah-hirzebruch}{article}{
    author={Atiyah, M. F.},
    author={Hirzebruch, F.},
    title={Analytic cycles on complex manifolds},
    journal={Topology},
    volume={1},
    date={1962},
    pages={25--45},
    issn={0040-9383},
}

\bib{atiyah-segal-completion}{article}{
    author={Atiyah, M. F.},
    author={Segal, G. B.},
    title={Equivariant $K$-theory and completion},
    journal={J. Differential Geometry},
    volume={3},
    date={1969},
    pages={1--18},
    issn={0022-040X},
}

\bib{bousfield-kan}{book}{
    author={Bousfield, A. K.},
    author={Kan, D. M.},
    title={Homotopy limits, completions and localizations},
    series={Lecture Notes in Mathematics, Vol. 304},
    publisher={Springer-Verlag},
    place={Berlin},
    date={1972},
    pages={v+348},
}

\bib{colliot-thelene-szamuely}{article}{
    author={Colliot-Th{\'e}l{\`e}ne, Jean-Louis},
    author={Szamuely, Tam{\'a}s},
    title={Autour de la conjecture de Tate \`a coefficients ${\bf Z}_{\ell}$ pour les vari\'et\'es sur les corps finis},
    conference={
        title={The geometry of algebraic cycles},
    },
    book={
        series={Clay Math. Proc.},
        volume={9},
        publisher={Amer. Math. Soc., Providence, RI},
    },
    date={2010},
    pages={83--98},
}


\bib{singular}{misc}{
    title = {{\sc Singular} {3-1-5} --- {A} computer algebra system for polynomial computations},
    author = {Decker, Wolfram},
    author ={Greuel, Gert-Martin},
    author = {Pfister, Gerhard},
    author = {Sch\"onemann, Hans},
    year = {2013},
    note = {\url{http://www.singular.uni-kl.de}},
}






\bib{esnault-kahn-levine-viehweg}{article}{
    author={Esnault, H{\'e}l{\`e}ne},
    author={Kahn, Bruno},
    author={Levine, Marc},
    author={Viehweg, Eckart},
    title={The Arason invariant and mod $2$ algebraic cycles},
    journal={J. Amer. Math. Soc.},
    volume={11},
    date={1998},
    number={1},
    pages={73--118},
    issn={0894-0347},
}

%

\bib{fulton-tableaux}{book}{
    author={Fulton, William},
    title={Young tableaux},
    series={London Mathematical Society Student Texts},
    volume={35},
    publisher={Cambridge University Press, Cambridge},
    date={1997},
    pages={x+260},
    isbn={0-521-56144-2},
    isbn={0-521-56724-6},
}
%
%


\bib{goresky-macpherson}{book}{
    author={Goresky, Mark},
    author={MacPherson, Robert},
    title={Stratified Morse theory},
    series={Ergebnisse der Mathematik und ihrer Grenzgebiete (3)},
    volume={14},
    publisher={Springer-Verlag},
    place={Berlin},
    date={1988},
    pages={xiv+272},
    isbn={3-540-17300-5},
}

\bib{kameko}{article}{
    author={Kameko, Masaki},
    title={On the integral Tate conjecture for finite fields},
    journal={Math. Proc. Camb. Phil. Soc.},
    year={2015},
    volume={158},
    pages={531--546},
}

\bib{pandharipande}{article}{
    author={Pandharipande, Rahul},
    title={Equivariant Chow rings of ${\rm O}(k),\ {\rm SO}(2k+1)$, and ${\rm
    SO}(4)$},
    journal={J. Reine Angew. Math.},
    volume={496},
    date={1998},
    pages={131--148},
    issn={0075-4102},
}
%
%

\bib{pirutka-yagita}{article}{
    author = {Pirutka, Alena},
    author = {Yagita, Nobuaki},
    title = {Note on the counterexamples for the integral Tate conjecture over finite fields},
    journal = {ArXiv e-prints},
    eprint =  {http://arxiv.org/abs/1401.1620},
    year = {2014},
    note = {To appear in Doc. Math.},
}

\bib{poonen}{article}{
    author={Poonen, Bjorn},
    title={Bertini theorems over finite fields},
    journal={Ann. of Math. (2)},
    volume={160},
    date={2004},
    number={3},
    pages={1099--1127},
    issn={0003-486X},
}





\bib{sage}{misc}{
    author={Stein, William,  et al.}
    Title        = {{S}age {M}athematics {S}oftware ({V}ersion 5.12)},
    note         = {\url{http://www.sagemath.org}},
    Year         = {2013},
}


\bib{thomason}{article}{
    author={Thomason, R. W.},
    title={Algebraic $K$-theory and \'etale cohomology},
    journal={Ann. Sci. \'Ecole Norm. Sup. (4)},
    volume={18},
    date={1985},
    number={3},
    pages={437--552},
    issn={0012-9593},
}

\bib{totaro}{article}{
    author={Totaro, Burt},
    title={The Chow ring of a classifying space},
    conference={
    title={Algebraic $K$-theory},
    address={Seattle, WA},
    date={1997},
    },
    book={
    series={Proc. Sympos. Pure Math.},
    volume={67},
    publisher={Amer. Math. Soc.},
    place={Providence, RI},
    },
    date={1999},
    pages={249--281},
}



\bib{totaro-book}{book}{
    author={Totaro, Burt},
    title={Group cohomology and algebraic cycles},
    series={Cambridge Tracts in Mathematics},
    volume={204},
    publisher={Cambridge University Press, Cambridge},
    date={2014},
    pages={xvi+228},
}

\bib{vezzosi}{article}{
   author={Vezzosi, Gabriele},
   title={On the Chow ring of the classifying stack of ${\rm PGL}_{3,\mathbf{C}}$},
   journal={J. Reine Angew. Math.},
   volume={523},
   date={2000},
   pages={1--54},
   issn={0075-4102},
}

\bib{vistoli}{article}{
    author={Vistoli, Angelo},
    title={On the cohomology and the Chow ring of the classifying space of ${\rm PGL}_p$},
    journal={J. Reine Angew. Math.},
    volume={610},
    date={2007},
    pages={181--227},
    issn={0075-4102},
}

\bib{voevodsky-reduced}{article}{
    author={Voevodsky, Vladimir},
    title={Reduced power operations in motivic cohomology},
    journal={Publ. Math. Inst. Hautes \'Etudes Sci.},
    number={98},
    date={2003},
    pages={1--57},
    issn={0073-8301},
}



\bib{woodward}{article}{
    author={Woodward, L. M.},
    title={The classification of principal ${\rm PU}_{n}$-bundles over a $4$-complex},
    journal={J. London Math. Soc. (2)},
    volume={25},
    date={1982},
    number={3},
    pages={513--524},
    issn={0024-6107},
}

\end{biblist}
\end{bibdiv}

\end{document}